\documentclass[a4paper,10pt,DIV11]{scrartcl}
\usepackage[utf8]{inputenc}

\usepackage{amsfonts}
\usepackage{amsmath,amssymb,amsthm}

\usepackage{stmaryrd}
\usepackage{physics}
\usepackage{tikz}
\usepackage{siunitx}
\usepackage{subcaption}
\usepackage{hyperref}
\usepackage{verbatim}
\usepackage{cite}
\usepackage{placeins}
\usepackage{color,wrapfig}
\usepackage{calligra}
\usepackage{mathtools}
\usepackage{multirow}
\usepackage{booktabs}

\usepackage{todonotes}
\setlength\oddsidemargin   {.5in}
\setlength\evensidemargin  {.5in}

\usepackage{soul}


\newcommand*{\ldblbrace}{\lbrace\mskip-5mu\lbrace}
\newcommand*{\rdblbrace}{\rbrace\mskip-5mu\rbrace}
\newcommand{\jump}[1]{\left\llbracket #1 \right\rrbracket}
\newcommand{\jp}[1]{\left[ #1 \right]}
\newcommand{\average}[1]{\left\ldblbrace #1 \right\rdblbrace}
\newcommand{\scp}[2]{\left\langle #1, #2 \right\rangle}
\newcommand{\abso}[1]{\left\vert #1 \right\vert}

\newcommand{\normLone}[1]{\left\lVert #1 \right\rVert_{L^1}}

\newcommand{\Ff}{\mathcal{F}}
\newcommand{\Ii}{\mathcal{I}}
\newcommand{\RR}{\mathbb{R}}
\newcommand{\Th}{\mathcal{T}_h}
\newcommand{\kk}{k}
\newcommand{\Kone}{k_{1}}
\newcommand{\Ktwo}{k_{2}}
\newcommand{\Kcut}{\text{cut}}
\newcommand{\jj}{j}
\newcommand{\ubar}{\bar{u}}
\newcommand{\xjplus}{x_{\jj+\frac{1}{2}}^+}
\newcommand{\xjminus}{x_{\jj+\frac{1}{2}}^-}


\newcommand{\GammaInt}{\Gamma_{\text{int}}}

\newcommand{\OmegaBg}{\widehat{\Omega}}
\newcommand{\ThBg}{\widehat{\mathcal{T}_h}}
\newcommand{\lTE}{\operatorname{len}(T_E)}  
\newcommand{\NeighE}{\mathcal{N}_o(E)}
\newcommand{\indNeighE}{\mathbb{I}_{\NeighE}}
\newcommand{\indNeighEplusE}{\mathbb{I}_{E \cup \NeighE}}

\newcommand{\etaE}{\eta_E}
\newcommand{\etaKone}{\eta_{\Kone}}
\newcommand{\Deltat}{\Delta t}
\newcommand{\diagentry}[1]{\text{\fboxsep.5ex\fbox{$#1$}}}

\newtheorem{remark}{Remark}[section]
\newtheorem{definition}{Definition}[section]
\newtheorem{lemma}{Lemma}[section]
\newtheorem{corollary}{Corollary}[section]

\definecolor{mid2-gray}{gray}{0.55}

\title{A stabilized DG cut cell method for discretizing the linear transport equation}
\author{Christian Engwer\thanks{Applied Mathematics M\"unster: Institute for Analysis and Numerics, University of M\"unster, Germany} \and Sandra May\thanks{Department of Mathematics, TU Dortmund University, Germany} \and Andreas N\"u\ss ing\footnotemark[1] \and Florian Streitb\"urger\footnotemark[2]}

\date{}

\begin{document}

\maketitle

\begin{abstract}
   We present new stabilization terms for solving the linear transport equation on a cut cell mesh 
   using the discontinuous Galerkin (DG) method in two dimensions with piecewise linear polynomials.
   The goal is to allow for explicit time stepping schemes, despite the presence of cut cells.
   Using a method of lines approach, we start with a standard upwind DG discretization for the background mesh and add penalty terms that stabilize the
   solution on small cut cells in a conservative way. Then, 
   one can use explicit time stepping, even on cut cells, with a time step length that is appropriate for the background mesh.
   In one dimension, we show monotonicity of the proposed scheme for piecewise constant polynomials and total variation diminishing in the means stability for piecewise linear polynomials. 
   We also present numerical results in one and two dimensions that support our theoretical findings.
\end{abstract}

%

\section{Introduction}
Finite element (FE) and more recently discontinuous Galerkin (DG) schemes have successfully been used on a huge variety of equations and are
 overall well understood. Therefore, research has advanced from solving model problems on simple geometries to trying to solve
real-world problems. As a result, grid generation has become a huge issue. Simulating flow around an airplane, flow in blood vessels, or phase
transitions requires to mesh very complicated geometries, which are often given as CAD models or implicitly.
The generation of corresponding body-fitted meshes is a very involved process.

In recent years, the usage of embedded boundary or cut cell meshes has become increasingly popular.
The details of these approaches vary. In this work, we will focus on the approach of cutting a given geometry out of given background mesh,
resulting in so called \emph{cut cells} along the boundary of the embedded object. These cells can have various shapes and may in particular
become arbitrarily small. Special schemes must be developed to
guarantee stability on these cells.
There already exists a significant amount of literature for stabilizing 
problems of elliptic and parabolic type
on cut cell meshes, see
e.g. \cite{bordas2018geometrically,Hansbo_Hansbo,dPLM_CAMWA_2017,cutFEM_2015,Barrett_Elliott,Bastian_Engwer,BADIA2018533,SAYE2017647,kummer2017extended}
and the references cited therein. For small cells, stability
of higher derivatives is lost and different stabilization techniques
have been proposed. A successful approach is the ghost penalty
stabilization\cite{Burman2010}, sometimes referred to as the cutFEM method~\cite{cutFEM_2015}.

For hyperbolic conservation laws on cut cell meshes, different
problems arise, compared to solving elliptic and parabolic PDEs. Probably the biggest issue is the \emph{small cell problem} -- that standard explicit
schemes are not stable on the arbitrarily small cut cells when the time step is chosen according to the cell size of the background mesh.
An additional complication is the fact that there is typically no concept of coercivity that could serve as a guideline for constructing stabilization
terms. Furthermore one wants the numerical
scheme to satisfy properties such as
monotonicity and TVD (total variation diminishing) stability in order to avoid overshoot or oscillations in the presence of discontinuities, which
could result in unphysical solutions.

In the context of \emph{finite volume schemes}, which have traditionally been used for the solution of hyperbolic conservation laws, already
several solution approaches exist that solve the small cell problem while dealing with arbitrarily small cut cells; e.g.,
the \emph{flux redistribution} method \cite{Chern_Colella,Pember_et_al,Col_Gra_Keen_Mod_JCP}, the \emph{h-box method} \cite{Berger_Leveque_AIAA,Berger_Helzel_Leveque_2005,Berger_Helzel_2012}, and the \emph{mixed explicit implicit scheme} \cite{May_Berger_explimpl}.
%
%
For the solution of hyperbolic conservation laws on cut cell meshes
using \emph{DG schemes}, very little work has been done.
As one of the first ones, in Bastian et.\,al \cite{bastian2011unfitted}
use an implicit Euler method to overcome the small cell problem
for a linear transport problem, but this approach is bound to first
order accuracy. For explicit time stepping schemes
some work relies on so called \emph{cell merging} or \emph{cell
  agglomeration} \cite{Kummer2016, Krivodonova2013}. In this approach,
cut cells that are too small are merged with neighboring cells. As
this approach puts the complexity
back into the mesh generation we do not want to consider it here.
Recently, G\"urkan and Massing \cite{Massing2018} suggested a new
scheme for solving the \emph{steady} advection-reaction problem on a cut cell mesh with potentially arbitrarily small cut cells that 
uses a ghost penalty approach. However, the authors consider only the steady case and do not discuss
properties such as monotonicity or TVD stability. Also, Sticko and
Kreiss \cite{Sticko_Kreiss} have worked on using penalty terms to
stabilize the solution of the wave equation. In both cases, the
penalty term employed has great similarity to the ghost penalty stabilization
used for elliptic problems \cite{Burman2010}.

To the best of our knowledge, the scheme suggested in this work is the first DG scheme for overcoming the small cell problem for time-dependent scalar conservation laws by dealing with the potentially arbitrarily small cut cells while ensuring monotonicity and TVD stability. Therefore, we will focus on the linear advection equation as the
standard model problem in the following. Many problems that occur for solving more complex hyperbolic equations already show up for this
simple model.

Our approach is based on stabilizing the spatial discretization.
One is free in the choice of the time stepping scheme. In particular, our stabilized spatial discretization allows for using
standard \emph{explicit} time stepping schemes everywhere, even on cut cells.
We obtain stability on cut cells by adding penalty terms. In that sense the suggested scheme is similar to the ghost penalty approach \cite{Burman2010}.
However, the details of the terms differ significantly. In particular, the terms are fundamentally different from the ones used in \cite{Massing2018,Sticko_Kreiss}. Conceptually, the terms are designed to reconstruct the proper domain of dependence
on small cut cells and their neighbors, similar to the idea of the
h-box method, but without an explicit geometry reconstruction.
In this work we consider piecewise \emph{linear} polynomials in one and two space dimensions.

This paper is structured as follows. In section \ref{sec:prelim}, we provide background information, such as triangulation and geometry information
as well as the standard scheme that we plan to stabilize. Section \ref{sec:stabilization} contains the core of this work, the
formulation of the stabilization terms in 2D. In section \ref{sec:1d-case}, we focus on the 1D case for a better understanding of the
proposed stabilization and provide theoretical results for the case of piecewise constant and piecewise linear polynomials. 
We discuss implementational aspects of the proposed scheme in section \ref{sec:impl}.
And in section \ref{sec:num-res}, we provide numerical results in both 1D and 2D that
support our theoretical findings.
We conclude with a short summary and an outlook in section \ref{sec:outlook}.

\section{Discretization}\label{sec:discretize}
\subsection{Preliminaries}\label{sec:prelim}
In this work, we focus on the linear advection equation as the classic model problem for hyperbolic scalar conservation laws, which is given by

\begin{subequations}\label{eq: hyp eq in 2d}
\begin{alignat}{2}
  u_t+\scp{\beta}{\nabla u}&=0&\quad&\text{in }\Omega \times (0,T),\\
  u &=g & &\text{on }\partial\Omega^{\text{in}} \times (0,T),\\
  u &= u_0 & &\text{on }\Omega \times \{ t = 0 \}.
\end{alignat}
\end{subequations}
Here, $\Omega \subset \mathbb{R}^2$ is a open, connected, polygonal domain,
$\partial \Omega$ denotes its boundary, and
$\partial\Omega^{\text{in}}:=\lbrace x\in\partial\Omega: \scp{\beta(x)}{n(x)}<0\rbrace$
its inflow boundary.
The velocity field $\beta \in \mathbb{R}^2$ is assumed to satisfy
$\nabla \cdot \beta =0$ and $\scp{\cdot}{\cdot}$ denotes the
canonical scalar product in $\mathbb{R}^2$.

\begin{figure}[tp]
  \centering
  \begin{tikzpicture}[scale=1.8]
    \node at (.75,1.15) {\footnotesize$\ThBg(\OmegaBg)$};
    \draw[semithick,step=0.25] (0,0) grid (1.5,1);
    \node at (1.9,0.5) {\huge$\cap$};
\begin{scope}[xshift=2.3cm]
    \node at (.75,1.15) {\footnotesize$\Omega$};
    \draw[semithick,fill=black!10!white]
    (0.0,0.0) --
    (0.3,0.0) --
    (0.5,0.03) --
    (0.75,0.132647574033) --
    (0.867352425967,0.25) --
    (0.98,0.5) --
    (1.0,0.52) --
    (1.25,0.68) --
    (1.5,0.7) --
    (1.5,1.0) --
    (0.0,1.0) --
    (0.0,0.0);
\end{scope}
    \node at (4.2,0.5) {\huge$\rightarrow$};
\begin{scope}[xshift=4.6cm]
    \node at (.75,1.15) {\footnotesize$\Th(\Omega)$};
    \draw[semithick,step=0.25] (0,0) grid (1.5,1);
    \fill[white] (0.3,0.0) --
      (0.5,0.03) --
      (0.75,0.132647574033) --
      (0.867352425967,0.25) --
      (0.98,0.5) --
      (1.0,0.52) --
      (1.25,0.68) --
      (1.5,0.7) --
      (1.51,0.7) --
      (1.51,-0.01) --
      (0.3,-0.01);
    \draw[semithick]
    (0.0,0.0) --
    (0.3,0.0) --
    (0.5,0.03) --
    (0.75,0.132647574033) --
    (0.867352425967,0.25) --
    (0.98,0.5) --
    (1.0,0.52) --
    (1.25,0.68) --
    (1.5,0.7) --
    (1.5,1.0) --
    (0.0,1.0) --
    (0.0,0.0);
\end{scope}
\end{tikzpicture}
  \caption{Construction of a cut cell mesh $\Th(\Omega)$: The background mesh $\ThBg$ of the larger domain
    $\OmegaBg$ is intersected with the computational domain $\Omega$,
    leading to cut cells $E = \widehat E \cap \Omega$, where $\widehat
    E \in \ThBg$ is an element of the background mesh.}
  \label{fig:mesh-geom-intersect}
\end{figure}
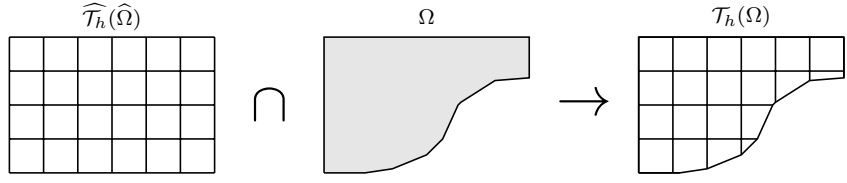

To construct our discrete approximation, we first consider a larger polygonal domain $\OmegaBg
\supset \Omega$, which is easy to mesh, see figure \ref{fig:mesh-geom-intersect}.
\begin{definition}[Cut cell mesh]
  Given a background mesh $\ThBg$ of $\OmegaBg$, we introduce a cut cell mesh $\Th(\Omega)$.
  Let $\ThBg$ be a non-overlapping set of shape-regular elements
  $\widehat E$ such that
  $\bigcup\limits_{\widehat E \in \ThBg} \overline{\widehat E} =
  \overline{\OmegaBg}$.
  For simplicity, we will assume in this paper that
  $\ThBg$ corresponds to a Cartesian background mesh but this
  is not necessary. Intersecting $\Omega$ and the background mesh
  induces the triangulation
\begin{equation*}
  \Th := \left\{~ E := \widehat E \cap \Omega~ \left| ~\widehat E \in \ThBg ~\right\}\right..
\end{equation*}
%
%
%
Note that $\Th$ is a triangulation of $\Omega$ consisting of structured (Cartesian) cells and cut cells.
%
%
%
The internal and external skeletons of the partitioning are given by
\begin{align}
  \GammaInt &=
  \left\{ e_{E_1,E_2} = \partial E_1 \cap \partial E_2
    \ \left|\  E_1, E_2 \in \Th
    \ \text{and}\  E_1 \neq E_2
    \ \text{and}\ |e_{E_1,E_2}|>0\right.\right\}. 
  \label{eq:int_skel} \\
  \Gamma_{\text{ext}} &=
  \left\{ e_{E} = \partial E \cap \partial\Omega
    \ \left|\ E \in \Th
    \ \text{and}\ |e_E|>0\right.\right\}.
    \label{eq:ext_skel}
    \end{align}
\end{definition}


\begin{definition}[Discrete Function Space]
Following the usual discontinuous Galerkin formulation we define the
discrete space $V_h^k(\Th) \subset L^2(\Omega)$ by
\begin{equation}\label{eq: def V_h}
   V_h^k(\Th) = \left\{ v_h \in L^2(\Omega)  \: \vert \: \forall E \in \Th, v_h{\vert_E} \in P^k(E) \right\},
\end{equation}
where $P^k$ denotes the polynomial space of degree $k$. 
\end{definition}

In this paper
we will only consider the cases $k=0$ and $k=1$, i.e., piecewise
constant and piecewise linear polynomials. On the skeleton $\GammaInt$,
  functions $u_h\in V^k_h$ are not well-defined. Therefore, we define jump and average as follows.

\begin{definition}[Jump \& Average]
  The \emph{jump} in normal direction over a face $e_{E_1,E_2}=\partial{E_1}\cap\partial{E_2}$ between two elements $E_1$ and $E_2$ is defined as
  \begin{align}\label{eq: jump in 2d}
    \jump{u_h}:={u_h}\vert_{_{E_1}}n_{E_1}+{u_h}\vert_{_{E_2}}n_{E_2},
  \end{align}
  where $n_{E_1},n_{E_2}\in\RR^2$ denote the unit outward normals of $E_1$ and $E_2$, respectively.
  Note that the jump of a scalar quantity is vector-valued, i.e., $\jump{u_h}\in\RR^2$.
  We define a scalar-valued jump on a face $e \in \partial E$ as
  \begin{align}\label{eq: jump scalar in 2d}
    \jp{u_h}_E:= - \jump{u_h} \cdot n_{E}.
  \end{align}
  The (scalar-valued) \emph{average} on a face is defined as
  \begin{align*}
    \average{u_h}:=\frac 1 2({u_h}\vert_{_{E_1}}+{u_h}\vert_{_{E_2}}).
  \end{align*}
\end{definition}

\subsection{Unstabilized DG Formulation}
We now introduce the DG scheme as used on structured background
cells. Additional stabilization terms will be necessary on small cut
cells, as we will discuss in section \ref{sec:stabilization}.
We will use a method of lines approach and first discretize \eqref{eq: hyp eq in 2d} in space and then in time.

\emph{The spatial discretization} of \eqref{eq: hyp eq in 2d} corresponds to the standard DG discretization using
upwind fluxes \cite{DiPietro_Ern}:
Find $u_h \in  V^k_h(\Th)$ such that
\begin{align}\label{eq: scheme 2d wo stab}
  \scp{d_tu_h(t)}{w_h}+a_h^{\text{upw}}\left(u_h(t), w_h\right)+l_h\left(w_h\right) = 0,\quad\forall w_h\in V_h^k(\Th),
\end{align}
with
\begin{align}
  &\begin{aligned}
      a_h^{\text{upw}}(u_h, w_h):=&-\sum_{E \in \Th} \int_E u_h\scp{\beta}{\nabla_hw_h}\dd{x} + \sum_{e \in \Gamma_{\text{ext}}} \int_{e} \scp{\beta}{n}^{\oplus}u_hw_h \dd{s}\\
      & +\sum_{e \in \GammaInt}\int_{e} \left( \average{u_h}\scp{\beta}{\jump{w_h}}
    + \frac{1}{2}\abso{\scp{\beta}{n_e}}\scp{\jump{u_h}}{\jump{w_h}} \right) \dd{s},
    \end{aligned}
    \label{eq:aupw}\\
    &\begin{aligned}
      l_h(w_h):=&-\sum_{e \in \Gamma_{\text{ext}}} \int_{e}\scp{\beta}{n}^\ominus g \: w_h\dd{s},\label{eq: l_h 2d}
    \end{aligned}
\end{align}
where $n\in\RR^2$ denotes the unit outer normal on $\partial\Omega$ and $n_e\in\RR^2$ denotes a unit normal on a face $e$ (of arbitrary but fixed orientation).
The negative and positive components of a quantity $x\in\RR$ are defined as
$
  x^\ominus:=\frac{\abso{x}-x}{2}$ and $x^\oplus:=\frac{\abso{x}+x}{2}.
 $ 
Note that $x^\ominus,x^\oplus\ge 0$.

The new method does not rely on a particular \emph{time stepping scheme}.
Nevertheless, it is desirable that the scheme is explicit in order
to allow for fast operator evaluation and to ease the use of
limiters. Furthermore, it should be of the same order of accuracy as
the spatial discretization and result in a discretization that is TVD.

While for piecewise constant polynomials in space the explicit Euler scheme
suffices, it will diminish the convergence order for
$V_h^1(\Th)$. We thus decided to employ the explicit second-order TVD
Runge-Kutta (RK) scheme \cite{GottliebShu} that is given for the ODE $y_t = F(y)$ by
\begin{align}\label{eq: 2nd order TVD RK}
  \begin{split}
    y^{(1)} &= y^n + \Deltat F(y^n),\\
    y^{n+1} &= \frac{1}{2} y^n + \frac{1}{2} y^{(1)} + \frac{1}{2} \Deltat F(y^{(1)}).
    \end{split}
\end{align}

A \emph{limiter} is necessary to avoid unphysical oscillations close to discontinuities when using piecewise linear polynomials.
We have chosen a Barth-Jespersen type limiter \cite{Barth_Jespersen} that has been extended to the DG setting
by exploiting the structure of the local Taylor basis \cite{Kuzmin}. This limits the gradient in such a way that the local solution of a cell $E$ evaluated at each neighboring centroid does
not over/undershoot the maximum/minimum taken over the cell's $E$ average value and the average values of all of cell's $E$ face neighbors.
The limiter is applied as a postprocessing step to both the intermediate solution $u^{(1)}$ and the solution $u^{n+1}$.

\section{Stabilization}\label{sec:stabilization}
There are two problems that need to be dealt with in order to
ensure stability on cut cells and to avoid overshoot and oscillations. First, the average mass in each cell must be controlled so that
a piecewise constant approximation does not produce oscillations. Second, the gradients must be controlled to
avoid oscillations within a single cell and unphysical evaluations at cell faces. Both goals must be reached
without violating mass conservation. For this purpose, we introduce two stabilization terms $J^0_h$ and $J^1_h$: the
first one will control average values and the second one mainly gradients.

One way to think of the stabilization terms is that they ensure that only a certain fraction of the inflow stays
in the small cut cell by transporting the remaining
part of the cut cell's inflow directly through the small cut cell into its outflow neighbors.


\begin{definition}[Capacity]
  We define the capacity of a cut cell $E$ as
  \begin{align}\label{eq:def:alphaE:omega}
    \alpha_{E,\omega}:= \min\left(\omega \frac{\abs{E}}{\Deltat \int_{\partial E}\scp{\beta}{n}^\ominus\dd{s}},1\right), \quad \omega \in \mathbb{R}, \omega \in (0,1].
  \end{align}
  For $\omega=1$, the capacity measures the fraction of the inflow
  that is allowed to flow into the cut cell $E$ without producing overshoot.
\end{definition}

We further denote with $\Ii$ the set of cut cells on which the stabilization should
be applied. The idea is that only small cut cells need stabilization:
\begin{equation}
  \label{eq:def Ii}
  \Ii  = \left\{ E \in \Th
    \left|\  E \: \text{is small cut cell and will be stabilized} \right.\right\}.
\end{equation}
We assume that for every pair of neighboring cut cells, at most one of
the two cut cells is an element of $\Ii$. While it is possible to
extend the proposed scheme to several neighboring cut cells, the
implementation would be more difficult.
We design the stabilization such that the CFL condition for explicit time stepping schemes essentially only depends on
the resolution of the background mesh
and not on the size of small cut cells from the set $\Ii$.

\begin{definition}[Inflow and outflow faces]
  For each cell $E$ we denote the set of inflow faces and outflow faces as
  \begin{align*}
    \Ff_{i}(E) & := \{e \in \partial E ~|~
                 \beta \cdot n_{E} < 0
                 \text{ on } \gamma \},\\
    \Ff_{o}(E) & := \{e \in \partial E ~|~
                 \beta \cdot n_{E} \ge 0
                 \text{ on } \gamma \},
  \end{align*}
  where $n_{E}$ denotes the unit outer normal of cell $E$ on face $e$.
\end{definition}

\begin{definition}[Outflow neighbors of $E \in \Ii$]
The set of outflow neighbors
$\NeighE$ of a cut cell $E \in \Ii$ is defined by
\begin{equation}
    \NeighE = \left\{E' \in \Th \left| \: \partial E \cap \partial E' \in \Ff_o(E), \: E' \neq E 
      \right\} \right..
\end{equation}
\end{definition}

Another way to think of 
the stabilization is that it reconstructs the proper domain of dependence, i.e., it ensures that
the outflow neighbors of a small cut cell get information from the inflow neighbors of that cut cell.
The stabilization term at a given point on an \emph{outflow} face thus
depends on the flux on the \emph{inflow} faces, measured upstream along the
trajectory.


\begin{figure}[htp]
\centering
\begin{subfigure}[t]{0.45\linewidth}
\centering
\includegraphics[width=0.7\linewidth]{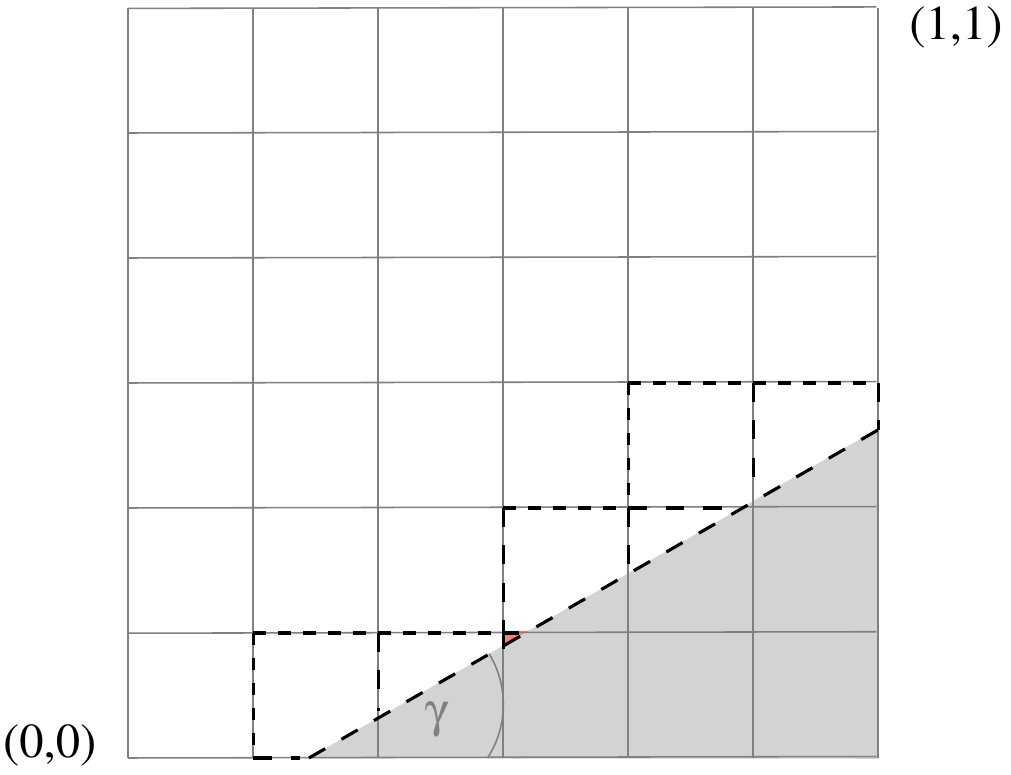}
\subcaption{Full domain}
\label{fig: traj op a}
\end{subfigure}
\hfill
\begin{subfigure}[t]{0.45\linewidth}
\centering
\includegraphics[width=0.70\linewidth]{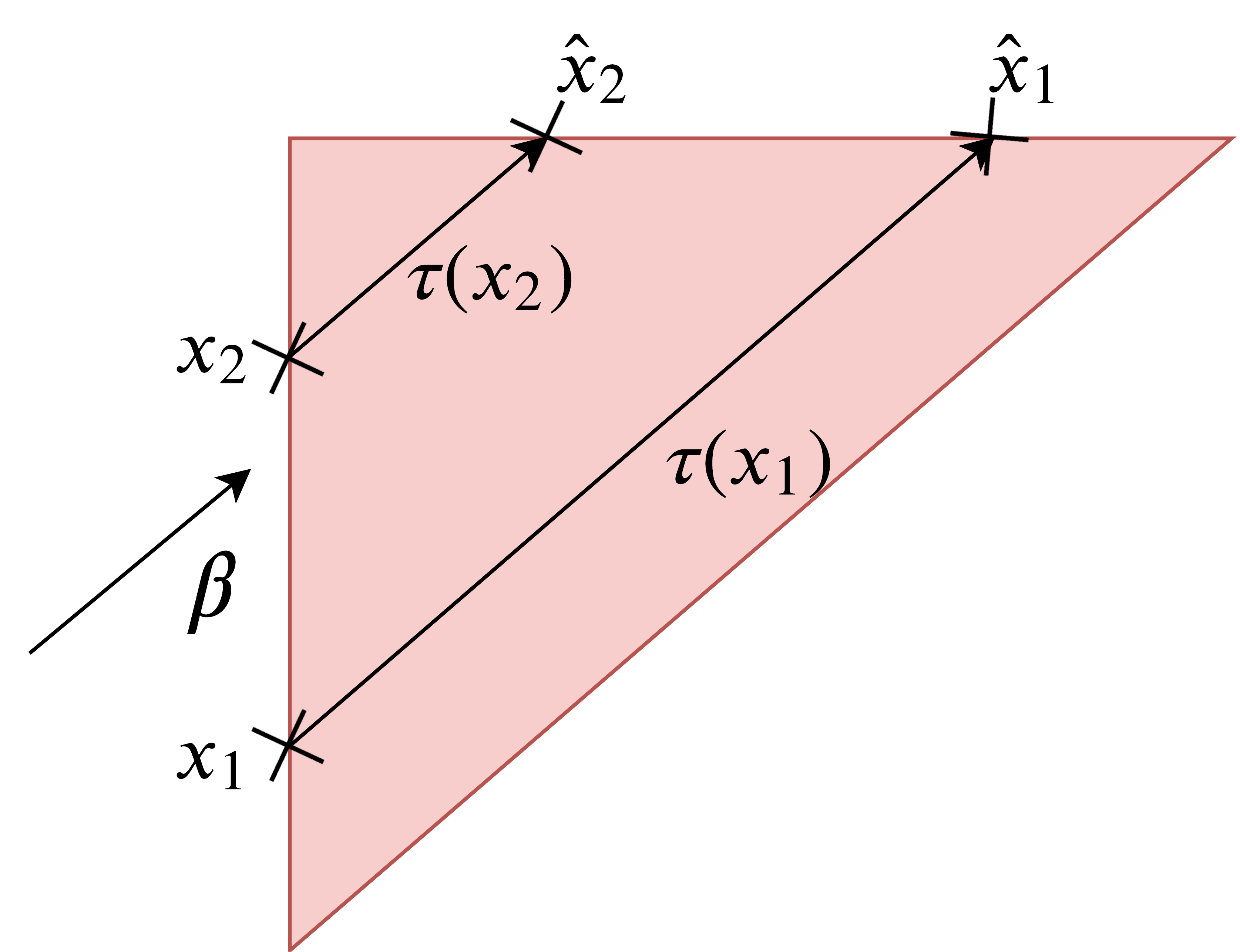}
\subcaption{Zoom on the small triangle cut cell}
\label{fig: traj op b}
\end{subfigure}%
\caption{Illustration of trajectory operator $T_E$: Consider flow parallel to a ramp. The tiny triangle cut cell (fig. \ref{fig: traj op a}) needs stabilization. Fig \ref{fig: traj op b} illustrates trajectories that start at points $x_1$ and $x_2$ on the inflow face and end at points $\hat{x}_1$ and $\hat{x}_2$ on the outflow face. The trajectory operator $T_E$ inverts this coupling:
$T_E(w)(\hat{x}_i) = w(x_i), i=1,2.$}
\label{fig: traj op}
\end{figure}

\begin{definition}[Trajectory operator]
  For every point $x_0 \in \Ff_{i}(E)$ the trajectory $\tau:\Ff_i(E)
  \to E$ describes the curve traced out by a particle starting at
  $x_0$ and being transported with $\beta$:
  \begin{align*}
    \tau(x_0) := \left\{ x \in E \left| \frac d{dt} x(t) = \beta(x(t)), x(0) = x_0, t>0 \right\}\right..
  \end{align*}

  Note that $\tau$ describes an injective mapping. We define $\tau^{-1}$ as the operator that maps
  back onto $\Ff_i(E)$ and introduce the \emph{trajectory operator}
  \begin{align*}
    T_E(w)(x) := w(\tau^{-1}(x)),
  \end{align*}
  as the evaluation of $w$ on the inflow face, following the trajectory
  backwards from $x$.
An illustration of the trajectory operator is given in figure \ref{fig: traj op}.
\end{definition}

\begin{definition}[Trajectory length]
  The trajectory length $\lTE(x)$ measures the length of a trajectory
  within the cell $E$. It is defined as
  follows: for a point $x \in E$ we identify the point $x_0$ on the
  inflow face and consider the length of the curve from $x_0$ (through
  $x$) to a point on an outflow face of $E$.
\end{definition}
In section \ref{sec:impl:trajectory} we will discuss how to realize $T_E$ and $\lTE$ practically.

The penalty term $J_h^0$ is now constructed in such a way that any inflow that exceeds the
capacity of a small cut cell is moved to the downwind cells and is
given by
\begin{eqnarray}
  \label{eq:stab:p0}
  &&\begin{split}
  J^0_h(u_h,w_h)=& 
  \sum_{E\in\Ii} 
  J^{0,E}_h(u_h,w_h),  \text{\qquad with}\\ 
  J^{0,E}_h(u_h,w_h) =&
  -\int_E \etaE \,T_E(\jp{u_h}_E)\scp{\beta}{\nabla w_h}\dd{x} \\& + 
   \sum_{e \in \Ff_{o}(E)} \int_e \etaE \,T_E(\jp{u_h}_E)
    \scp{\beta}{\jump{w_h}}\dd{s} ,
  \end{split}\\
  &&\text{and with } \etaE = 1-\alpha_{E,\frac{1}{2}}.
\end{eqnarray}

The volume contribution can be seen as a transport within the cut cell of
the quantity $\etaE = 1-\alpha_{E,\frac{1}{2}}$, which should \emph{not} remain in
the cut cell, from its inflow faces to its outflow faces. The second
term which is applied on the outflow faces transports this quantity out of the
cut cell into its downwind neighbors.  We only allow the
fraction $\alpha_{E,\frac{1}{2}}$ instead of $\alpha_{E,1}$ to stay within the
cut cell as the latter one would result in too restrictive slope
limiting, leading to close to zero gradients on the cut cells.
The main task of the second stabilization term $J^1_h$ is to restore control over
gradients.
Its structure is similar to the one of $J_h^0$,
but it employs the derivative of the discrete solution $u_h$ and uses a different scaling and a different relation between skeleton
and boundary terms: 
%
%
\begin{align}
  \label{eq:stab:p1}
  \begin{split}
  J^1_h(u_h,w_h) = &
    \sum_{E\in\Ii} 
    J^{1,E}_h(u_h,w_h), \text{\qquad with}\\
    J^{1,E}_h(u_h,w_h) =
    & - \rho \int_E \etaE \,\lTE \, T_E(\jp{\partial_\tau u_h}_E)
    \scp{\beta}{\nabla w_h}\dd{x} \\
    &+
    \sum_{e \in \Ff_{o}(E)} \int_e
    \etaE \,\lTE \, T_E(\jp{\partial_\tau u_h}_E)
    \scp{\beta}{\jump{w_h}}\dd{s} ,
  \end{split}
\end{align}
where $\partial_\tau u_h$ denotes the derivative along the trajectory,
which is evaluated 
$\partial_\tau u_h = \scp{\nabla u_h}{\beta} / \norm{\beta}$, and
$\rho = \frac{1}{2}$. (We introduce the parameter $\rho$ here as we will later examine the stability of the resulting scheme for different values of $\rho$.)
The stabilized spatial discretization is then given by:
Find $u_h \in  V^k_h(\Th)$ such that $\forall \: w_h\in V_h^k(\Th)$
\begin{equation}
\label{eq: scheme 2d with stab}
  \scp{d_t u_h(t)}{w_h}+a_h^{\text{upw}}\left(u_h(t), w_h\right)
  +J^0_h(u_h,w_h) + J^1_h(u_h,w_h) + l_h\left(w_h\right) = 0.
\end{equation}

Next, we will examine the mass conservation properties of the stabilization terms. 

\begin{definition}[Indicator function]
The indicator function of a cell or cell patch is defined as
\begin{equation}\label{eq: indicator}
    \mathbb{I}_E(x) =
    \begin{cases} 1 & \text{for } x \in E, \\
    0 & \text{otherwise} .
    \end{cases}
\end{equation}
\end{definition}

\begin{lemma} Let $E \in \Ii$.
Then
\begin{equation}
    J^{0,E}_h (u_h,\indNeighEplusE) + J^{1,E}_h (u_h,\indNeighEplusE) = 0.
\end{equation}
\end{lemma}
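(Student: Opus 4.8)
The plan is to exploit that the test function $\indNeighEplusE$ is constant (equal to one) on the entire patch $E \cup \NeighE$, so that its gradient on $E$ and its jumps across the outflow faces both vanish. I would expand each of $J^{0,E}_h$ and $J^{1,E}_h$ into its volume contribution and its outflow-face contribution, and treat the two types of terms separately. Both stabilization terms share the same structural skeleton (only the transported data and the scaling $\rho\,\lTE$ differ), so the same two arguments dispatch all four integrals.

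First I would handle the two volume integrals. Since $E \subset E \cup \NeighE$, the restriction $\indNeighEplusE|_E \equiv 1$ is constant, hence $\nabla \indNeighEplusE = 0$ on the interior of $E$. Consequently $-\int_E \etaE\, T_E(\jp{u_h}_E)\scp{\beta}{\nabla \indNeighEplusE}\dd{x}$ and its $J^1$ analogue (carrying the extra factor $\rho\,\lTE$ and the data $T_E(\jp{\partial_\tau u_h}_E)$) both vanish identically, irrespective of the transported quantities.

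Next I would treat the outflow-face integrals. By the definition of $\NeighE$, every interior outflow face $e \in \Ff_o(E)$ satisfies $e = \partial E \cap \partial E'$ for some $E' \in \NeighE$, so both $E$ and $E'$ lie in the patch and $\indNeighEplusE|_E = \indNeighEplusE|_{E'} = 1$. Using $n_{E'} = -n_E$ on the shared face, the vector-valued jump becomes $\jump{\indNeighEplusE} = 1\cdot n_E + 1\cdot n_{E'} = 0$, whence $\scp{\beta}{\jump{\indNeighEplusE}} = 0$ pointwise on each such face. Thus every outflow-face integral in $J^{0,E}_h$ and $J^{1,E}_h$ vanishes, and adding the (already vanishing) volume terms yields the claimed identity.

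The step I expect to need the most care is the bookkeeping of outflow faces lying on the external boundary $\GammaExt$, where the two-sided jump $\jump{\cdot}$ is not defined and the patch supplies no matching neighbor. I would argue that the jump-based outflow sums range only over interior faces, since $\jump{\cdot}$ is introduced only on $\GammaInt$; external outflow faces therefore contribute nothing to these particular stabilization terms. Once this is settled, no genuine computation remains — the identity follows entirely from ``$\nabla(\mathrm{const}) = 0$'' on $E$ and ``$\jump{\,\cdot\,} = 0$'' for a function that is continuous across each outflow face.
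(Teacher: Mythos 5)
Your proof is correct and follows essentially the same route as the paper: the volume integrals vanish because $\nabla\indNeighEplusE=0$ on $E$, and the outflow-face integrals vanish because the patch indicator is continuous across each face $e=\partial E\cap\partial E'$ with $E'\in\NeighE$. The only cosmetic difference is that the paper splits the test function as $\mathbb{I}_E+\indNeighE$ and cancels the two resulting face contributions against each other, whereas you observe directly that $\jump{\indNeighEplusE}=n_E+n_{E'}=0$ pointwise — the same computation, and your explicit handling of external outflow faces is a point the paper leaves implicit.
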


\begin{proof}
We first focus on $J^{0,E}_h$. 
Due to the linearity of $J^{0,E}_h$ in $w_h$, there holds 
\begin{align*}
&J^{0,E}_h(u_h,\indNeighEplusE) = J^{0,E}_h(u_h,\mathbb{I}_E) + J^{0,E}_h(u_h,\indNeighE)\\
    &=\sum_{e \in \Ff_{o}(E)} \int_{e} \etaE T_E(\jp{u_h}_E) \scp{\beta}{\mathbb{I}_E n_E} \dd{s}
    +
  \sum_{e \in \Ff_{o}(E)}  \int_e \etaE \,T_E(\jp{u_h}_E)
    \scp{\beta}{- \indNeighE n_E}\dd{s}\\
    &= 0.
\end{align*}
The same argumentation can be used for $J_h^{1,E}$.
\end{proof}

This result (together with the local mass conservation properties of the standard DG discretization \eqref{eq:aupw}) guarantees \emph{local mass conservation} in the extended control volume $E \cup \NeighE$ of a small cut cell
$E \in \Ii$: the same amount of mass that is \emph{not} allowed to stay in $E$
is redistributed to $E$'s outflow neighbors.
This slightly extended setting of local mass conservation is a natural consequence of our stabilization.

%
%

\section{The 1D case}\label{sec:1d-case}
For a better understanding of the proposed scheme and for the validation of some theoretical properties, we focus on the 1D case in this section.
WLOG, we consider the interval $I=(0,1)$ and assume $\beta>0$ to be constant.
Our PDE is given by
\begin{equation}\label{eq: 1d adv}
u_t(x,t) + \beta u_x(x,t) = 0 \text{ in } I \times (0,T), \quad u(0,t) = g(t) \text{ for } t \in (0,T),
\end{equation}
with initial data $u(x,0) =u_0(x)$.
For the analysis in this section we will focus on solving the model problem \textbf{MP} shown in figure \ref{fig:MP1}: we discretize the interval $I$ in
$N-1$ cells $E_\jj=[x_{\jj-\frac{1}{2}},x_{\jj+\frac{1}{2}}]$ of equidistant length $h$.
Then we split one cell, the cell $\kk$,
in two cells of lengths $\alpha h$ (referred to as cell $\Kone$) and $(1-\alpha) h$ (referred to as cell $\Ktwo$)
with $\alpha \in (0,\frac{1}{2}]$.
  \begin{figure}[h]
  \begin{center}
  \includegraphics[width=0.9\linewidth]{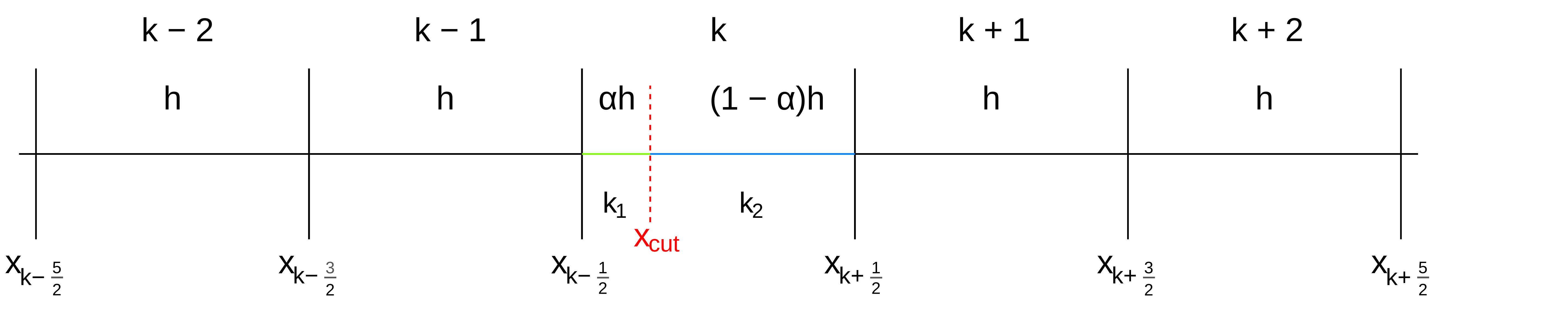}
  \caption{Model problem \textbf{MP}: equidistant mesh with cell $\kk$
    split into two cells of lengths $\alpha h$ and $(1-\alpha) h$ with
    $\alpha \in (0,\frac{1}{2}]$.
  }
  \label{fig:MP1}
  \end{center}
\end{figure}
\begin{remark}[Notation]
 Different to 2D, there is a natural order of cells in 1D. In 1D, we will therefore use the notation 
 indicated in figure \ref{fig:MP1} and refer to `cell $j$' and associate faces with $x_{j \pm 1/2}$
 instead of using `cell $E$' or `cell $E_j$' or `face $e$'.
\end{remark}

Using the notation $x_{j+\frac{1}{2}}^{\pm} = \lim_{\varepsilon \to 0} x_{j+\frac{1}{2}} \pm \varepsilon$, the
bilinear form 
\eqref{eq:aupw} simplifies to
\begin{align}\label{eq: ah_1d}
  \begin{split}
    a_h^\text{upw} (u_h,w_h) =& -\sum_{\jj=1}^N \int_{\jj} \beta u_h \partial_x w_h\dd{x}
    +\beta u_h(x_{N+\frac{1}{2}}^-)  w_h(x_{N+\frac{1}{2}}^-)\\
    &+ 	\sum_{\jj=1}^{N-1} \beta u_h(x_{\jj+\frac{1}{2}}^-)\jump{w_h}_{\jj+\frac{1}{2}}.
    \end{split}
      \end{align}
 The definitions of the jump and of $l_h$, given by \eqref{eq: jump in 2d} and \eqref{eq: l_h 2d} in 2D, reduce in 1D to
  \begin{align}\label{eq: rhs 1d}
  \jump{w_h}_{\jj+\frac{1}{2}} = w_h(\xjminus) - w_h(\xjplus), \quad l_h(w_h) :=-\beta g(0)w_h(x_{\frac{1}{2}}^+).
  \end{align}
  Further, we define the CFL number
  \begin{equation}\label{eq: lambda}
  \lambda = \frac{\beta \Deltat}{h}.
  \end{equation}
  Note that $\lambda$ is chosen only with respect to the background mesh width $h$ but
  the volume fraction $\alpha \in \left(0,\frac{1}{2}\right]$ in \textbf{MP} is allowed to become arbitrarily close to 0.
  For the considered model problem \textbf{MP}, the set of cells that need to be stabilized consists of
  $\Ii = \{ \Kone \}$ and therefore $J^0_h$ and $J^1_h$ coincide with $J^{0,k_1}_h$ and $J^{1,k_1}_h$, respectively. The stabilization terms $J_h^{0,k_1}$ and $J_h^{1,k_1}$, given by \eqref{eq:stab:p0} and \eqref{eq:stab:p1} in 2D,
  simplify significantly for the considered setup in 1D:
  \begin{itemize}
  \item For cell $\Kone$, the inflow face is $k-\frac{1}{2}$ and the outflow
  face is $k_{\text{cut}}$.
  \item For $x = x_{\Kcut}$, $T_E(\jp{u_h}_E)(x)$ in 2D simply corresponds to
  $\jump{u_h}_{\kk-\frac{1}{2}}$ in 1D: the trajectory operator $T_E$ is trivial and
  $\jp{u_h}_E$, defined in \eqref{eq: jump scalar in 2d}, on an inflow face corresponds to the
  definition of $\jump{u_h}$ in 1D, given by \eqref{eq: rhs 1d}, for $\beta > 0$.
  \item The length $\lTE$ is simply $\alpha h$.
  \item The derivative $\partial_{\tau} u_h$ in the definition of
  $J^{1,k_1}_h$ simply corresponds to $\partial_x u_h$.
  \end{itemize}
  Summarizing $J_h = J_h^0+ J_h^1$, the stabilization term in 1D for the considered setup
  is of the form
  \begin{align}\label{eq: J_h in 1d general}
    \begin{split}
  J_h(u_h,w_h) =& \beta \, \etaKone \left(\jump{u_h}_{\kk-\frac{1}{2}}+\alpha h\jump{\partial_xu_h}_{\kk-\frac{1}{2}}\right)\jump{w_h}_{\Kcut}\\
  &-\int_{{\Kone}}\beta \, \etaKone \left(\jump{u_h}_{\kk-\frac{1}{2}}+
  \alpha h \: \rho\jump{\partial_xu_h}_{\kk-\frac{1}{2}}\right)\partial_x w_h \dd{x}
  \end{split}
  \end{align}
  with
  \begin{equation}\label{eq: def eta 1d}
    \etaKone = 1 - \alpha_{\Kone,\frac{1}{2}}, \quad  \alpha_{\Kone,\frac{1}{2}} = \min \left( \frac{\alpha}{2\lambda},1 \right), \quad \rho = \frac{1}{2}.
  \end{equation}
  Note that $\alpha_{\Kone,\omega}$ is exactly the
  1D equivalent of the capacity $\alpha_{E,\omega}$ defined in \eqref{eq:def:alphaE:omega} as using
  $\lambda = \frac{\beta \Deltat}{h}$ we can reformulate
  \begin{align*}
    \alpha_{E,\omega}=\min\left(\omega\frac{\alpha h}{\beta\Deltat},1\right)
    =\min\left(\omega\frac{\alpha}{\lambda},1\right).
\end{align*}
  The resulting stabilized scheme is then given by: Find $u_h \in V_h^k(\Th)$ such that
   \begin{equation}\label{eq: stab. scheme}
     \scp{d_tu_h(t)}{w_h}+a_h^{\text{upw}}\left(u_h(t), w_h\right) +  J_h(u_h(t),w_h)
     +l_h\left(w_h\right) = 0\quad\forall w_h\in V_h^k(\Th).
     \end{equation}
\begin{definition}[\textbf{MP0} and \textbf{MP1}]
Both \textbf{MP0} and \textbf{MP1} refer to 
\begin{itemize}
    \item solving the advection equation \eqref{eq: 1d adv} using periodic boundary conditions for the model problem \textbf{MP} shown in figure
    \ref{fig:MP1},
    \item using the stabilized scheme \eqref{eq: stab. scheme} with CFL $\lambda = \frac{\beta \Deltat}{h}$.
\end{itemize}
By \textbf{MP0} we refer to the case of piecewise constant polynomials and by \textbf{MP1} to the case of 
piecewise linear polynomials, respectively.
\end{definition}

\subsection{Behavior of the space discretization \emph{without} stabilization term $J_h$}\label{sec:num-res-no-stab}
Before we discuss the properties of our stabilized scheme, we shortly present numerical results
that illuminate the different kind of instabilities that occur if one does \emph{not} use the
stabilization term $J_h$.
\subsubsection{Test 1: 1D single small cut cell} We consider the model problem \textbf{MP} and place the cell $k$ such
that $x_{k-1/2}=0.5$. We use discontinuous initial data
\begin{equation}\label{eq: disc init data}
    u_0(x) = \begin{cases} 1 & 0.1 \le x \le 0.5, \\
    0 & \text{otherwise}.
    \end{cases}
\end{equation}
We set $\beta = 1$, $\alpha = 0.001$, $\lambda = 0.5$, and $h=0.05$, and use $V_h^0(\Th)$ as well as periodic boundary conditions.
\begin{figure}[htp]
\centering
\begin{subfigure}[t]{0.45\linewidth}
\centering
\includegraphics[width=0.9\linewidth]{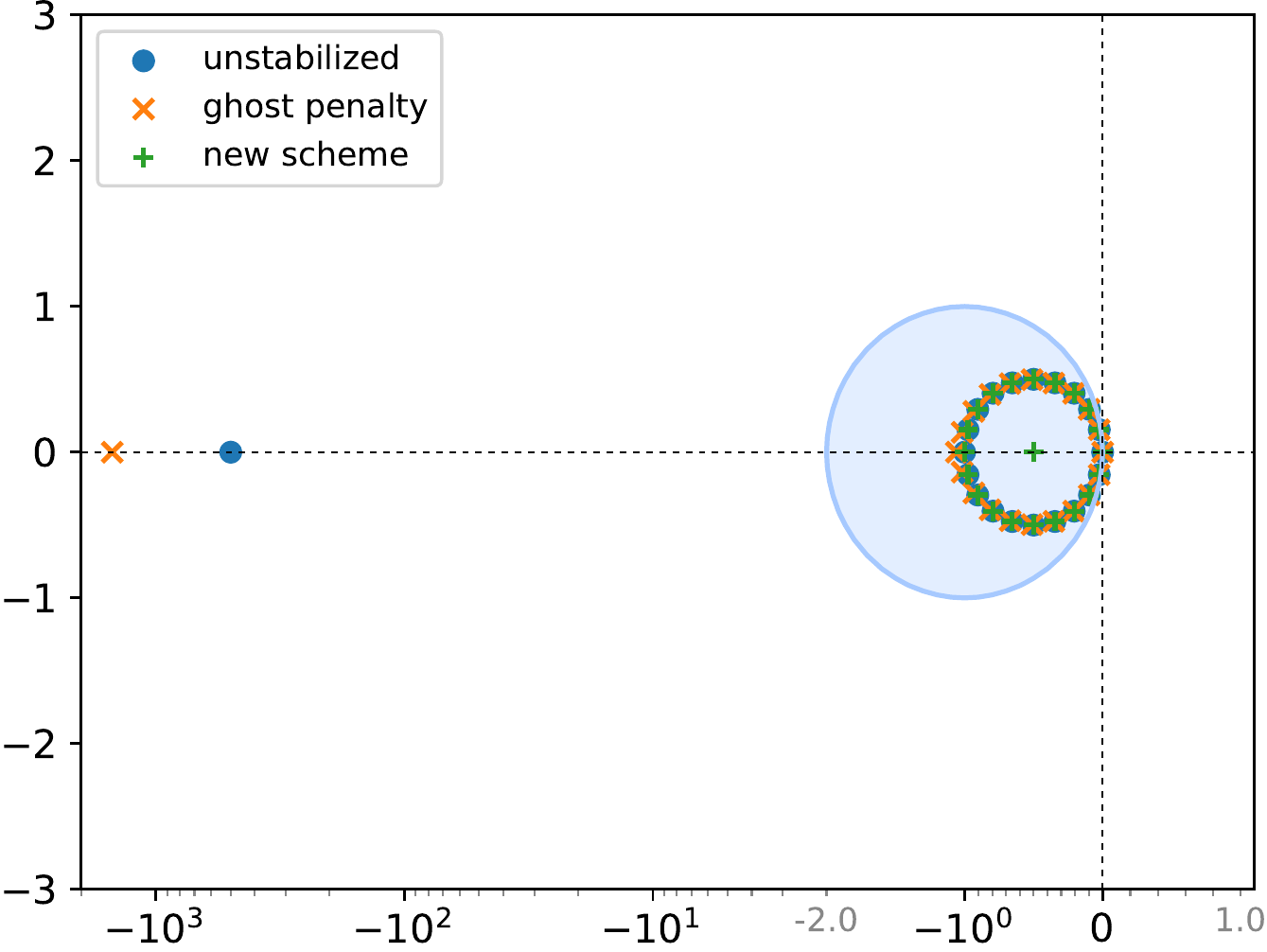}
\subcaption{Distribution of eigenvalues for the three schemes. In blue we depict the
     stability region of the explicit Euler.}
\label{subfig: unstab case 1d a}
\end{subfigure}
\hspace*{0.08\linewidth}
\begin{subfigure}[t]{0.45\linewidth}
\centering
\includegraphics[width=0.9\linewidth]{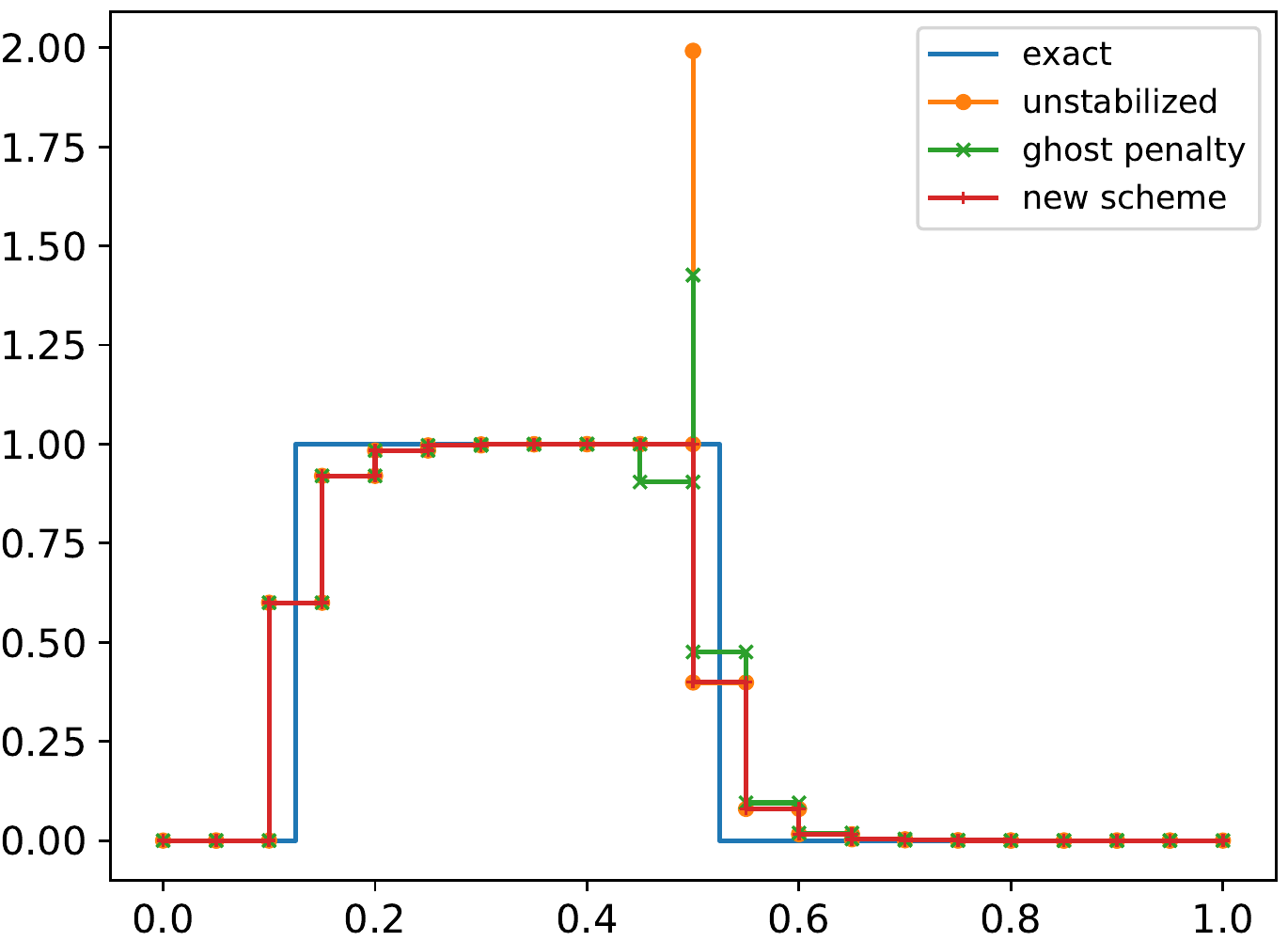}
\subcaption{Solution after one time step using implicit Trapezoidal rule in time.}
\label{subfig: unstab case 1d b}
\end{subfigure}
\caption{Numerical results for \textbf{Test 1}, comparing
  unstabilized, ghost penalty, and the proposed scheme.}
\end{figure}
%

We compare three scenarios: (1) we do not use stabilization, i.e., we apply \eqref{eq: stab. scheme} \emph{without} the term
$J_h$; (2) we apply the stabilization $J_h$; (3) instead of the $J_h$ suggested in this work, we use a straight-forward adaption of the ghost penalty stabilization \cite{Burman2010} to stabilize the problem. The ghost penalty stabilization has been used very successfully to stabilize the solution of \emph{elliptic} problems on cut cell
meshes. A first attempt to transfer the stabilization to the situation considered here would result in using a stabilization term of the form
\begin{equation}\label{eq: ghost penalty stab}
\rho_1\jump{u_h}_{\kk-\frac{1}{2}}\jump{w_h}_{\kk-\frac{1}{2}} + \rho_2\jump{u_h}_{\Kcut}\jump{w_h}_{\Kcut}
\end{equation}
(instead of the $J_h$ that we suggest).
We use $\rho_{1} = \rho_2 = \eta_{\Kone}$ (as for $J_h$).

We rewrite the described spatial discretization as a system of ODEs of the form $\underline{y}_t = L \underline{y}$ with
a suitable matrix $L$.
In figure \ref{subfig: unstab case 1d a} we present the eigenvalue distribution of $\Deltat L$ where $\Deltat = \lambda h$.
Most values lie in the stability region of the explicit Euler scheme. But if we do not stabilize or use the ghost penalty stabilization \eqref{eq: ghost penalty stab}, there is one outlier eigenvalue (corresponding to the small cut cell), leading to stability problems. With our stabilization, all values are in the stability
region of explicit Euler.

Next, we consider the usage of an implicit time stepping scheme. The result after one time step for using the implicit
Trapezoidal scheme is shown in figure \ref{subfig: unstab case 1d b}. Despite the time stepping scheme being stable in an ODE stability sense, we observe a strong overshoot: instead of staying between 0 and 1, the value on the cut cell jumps up to 2 if we do not stabilize.
When using the very simple ghost penalty stabilization \eqref{eq: ghost penalty stab} the overshoot is smaller but still significant.
The reason for this behavior is that the implicit Trapezoidal scheme is \textit{not} unconditionally TVD. In fact, no second- or higher-order scheme has this property
\cite{Spijker,Ketcheson_Macdonald_Gottlieb}. As a result, if the cell fraction $\alpha$ becomes too small, the scheme
develops unphysical oscillations and/or overshoot unless one uses a stabilization that prohibits that behavior. Our stabilization
has been designed to achieve this.

In the following, we will support these numerical results by mathematical facts: we will prove that our stabilization term $J_h$ (a) makes explicit time stepping stable again and (b) guarantees that no unphysical oscillations and/or overshoot occur. 

\subsection{Piecewise constant polynomials}
  In the special case of piecewise constant
  polynomials, the discrete solution $u_h$ on cell $j$ at time $t^n$, which we will denote by $u_j^n$, 
  is an approximation to the average of $u$ over cell $j$
   and the method is equivalent to a
  first-order finite volume scheme.
  %
  For \textbf{MP0},
  the stabilization term \eqref{eq: J_h in 1d general} reduces to (with $\etaKone$ given by \eqref{eq: def eta 1d})
 \begin{equation}\label{eq: stab P0}
	J_h(u_h,w_h)=\beta \, \etaKone\jump{u_h}_{\kk-\frac{1}{2}}\jump{w_h}_{\Kcut}.
 \end{equation}

\subsubsection{Monotonicity}
  One very desirable property of a first-order scheme for hyperbolic conservation laws is to be monotone.
  This property guarantees for example that an overshoot as seen in figure \ref{subfig: unstab case 1d b} cannot occur.
\begin{definition}[see \cite{HoldenRisebro}]
A method is called {\em monotone}, if
\begin{equation}
u^n_i\geq v^n_i \quad  \forall \ i\ \Rightarrow  \ u^{n+1}_i\geq v^{n+1}_i \quad \forall \ i .
\end{equation}
\end{definition}
Since this is challenging to verify we will use the following
equivalent definition.
\begin{definition}[see \cite{Toro}]\label{Monoton}
A method
$ u^{n+1}_\jj = H(u^n_{\jj-i_L},u^n_{\jj-i_L+1},...,u^n_{\jj+i_R}) $
is called {\em monotone}, if $\:\forall j$ there holds for every $l$ with $-i_L\le l\le i_R$
\begin{equation}\label{Def_monotone_coeff}
\frac{\partial H}{\partial u_{j+l}}(u_{j-i_L},...,u_{j+i_R})\geq 0.
\end{equation}
\end{definition}
For a linear scheme this implies that all coefficients need to be non-negative. We will verify this property for our stabilized scheme using the concept of M-matrices.
\begin{definition}[see \cite{Plemmons1977}]\label{M-Matrix_Definition}
Let $B\in\mathbb{R}^{n\times n}$ be a Z-Matrix, i.e., for $B=(b_{ij})$ there holds $b_{ij} \le 0$ for every $i\neq j$. If $b_{ii}>0$ $\forall i$ and there exists a positive diagonal matrix $D$ such that $BD$ is strictly diagonal dominant, we call B an {\em M-matrix}.
\end{definition}
\begin{lemma}[see \cite{Plemmons1977}]\label{M-Matrix_Lemma}
Let $B\in \mathbb{R}^{n\times n}$ be an M-matrix. Then $B^{-1}$ exists and $B^{-1}\geq 0.$
\end{lemma}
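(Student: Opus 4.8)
The plan is to exploit the scaled diagonal dominance to write $B^{-1}$ as a convergent Neumann series of entrywise non-negative matrices. First I would split off the diagonal: since $b_{ii}>0$, set $\Lambda = \operatorname{diag}(b_{11},\dots,b_{nn})$, a positive diagonal matrix, and write $B = \Lambda - N$, where $N_{ij} = -b_{ij}$ for $i\neq j$ and $N_{ii}=0$. Because $B$ is a Z-matrix, $b_{ij}\le 0$ for $i\neq j$, hence $N\ge 0$ entrywise. Factoring out $\Lambda$ gives $B = \Lambda(I - P)$ with $P := \Lambda^{-1}N \ge 0$. The statement then reduces to showing that $I-P$ is invertible with non-negative inverse: granting this, $B=\Lambda(I-P)$ is invertible and $B^{-1} = (I-P)^{-1}\Lambda^{-1}$, which is non-negative because $\Lambda^{-1}\ge 0$.

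Next I would extract a quantitative contraction from the hypothesis. Let $d = (d_1,\dots,d_n)^\top>0$ be the diagonal of the scaling matrix $D$. Strict (row) diagonal dominance of $BD$ reads, for each $i$,
\begin{equation*}
b_{ii}d_i > \sum_{j\neq i} \abso{b_{ij}}\, d_j = \sum_{j\neq i} N_{ij}\, d_j .
\end{equation*}
Dividing by $b_{ii}>0$, this is exactly $(Pd)_i < d_i$ for every $i$, i.e. $Pd < d$ componentwise. Setting $\theta := \max_i (Pd)_i/d_i$ yields $\theta<1$ and $Pd \le \theta d$. I would then introduce the weighted norm $\norm{x}_d := \max_i \abso{x_i}/d_i$; using $P\ge 0$ together with $\abso{x_j}\le \norm{x}_d\, d_j$ and $Pd\le\theta d$, a direct estimate gives $\abso{(Px)_i}\le \norm{x}_d (Pd)_i \le \theta\,\norm{x}_d\, d_i$, hence $\norm{Px}_d \le \theta\,\norm{x}_d$, so the induced operator norm satisfies $\norm{P}_d \le \theta < 1$.

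The remaining steps are standard. Since the spectral radius is bounded by any induced norm, $\varrho(P)\le\theta<1$, so the Neumann series $\sum_{k\ge 0}P^k$ converges to $(I-P)^{-1}$; because each power $P^k\ge 0$, the sum is non-negative, and in particular $I-P$, and therefore $B=\Lambda(I-P)$, is invertible. Finally $B^{-1} = (I-P)^{-1}\Lambda^{-1}$ is a product of two entrywise non-negative matrices and hence $B^{-1}\ge 0$, as claimed.

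I expect the only genuinely delicate point to be the passage from the scaled diagonal dominance to the spectral bound $\varrho(P)<1$; everything else is bookkeeping. The cleanest route is the weighted-norm argument above, which converts the componentwise inequality $Pd\le\theta d$ into a true operator contraction and avoids any appeal to Perron--Frobenius theory. One subtlety to keep in mind is whether the assumed diagonal dominance of $BD$ is by rows or by columns; the argument is identical in the column case after transposing, using that $B^{-1}\ge 0$ is equivalent to $(B^\top)^{-1}=(B^{-1})^\top\ge 0$ read off from the transposed splitting.
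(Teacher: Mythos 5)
Your proof is correct and complete. Note, however, that the paper does not actually prove this lemma at all: it is stated as a known result and attributed to the literature (Plemmons), so there is no in-paper argument to compare against. Your argument is the standard one and is carried out cleanly: the splitting $B=\Lambda(I-P)$ with $\Lambda=\operatorname{diag}(b_{11},\dots,b_{nn})>0$ and $P=\Lambda^{-1}N\ge 0$ is valid because $B$ is a Z-matrix with positive diagonal; the translation of strict row diagonal dominance of $BD$ into the componentwise inequality $Pd<d$ is exact (using $(BD)_{ij}=b_{ij}d_j$ and $d_j>0$); the weighted max-norm estimate gives $\varrho(P)\le\theta<1$ without any appeal to Perron--Frobenius; and the Neumann series then delivers both invertibility and entrywise non-negativity of $(I-P)^{-1}$, hence of $B^{-1}=(I-P)^{-1}\Lambda^{-1}$. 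Your closing remark about row versus column dominance is the right caveat, and the transposition argument disposes of it. What your self-contained derivation buys over the paper's bare citation is that the reader sees exactly which hypotheses are used where -- in particular that the positive diagonal matrix $D$ enters only through the contraction bound -- which is useful given that the paper later verifies precisely these hypotheses for its matrices $B$ and $C$ in the monotonicity proof.
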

\begin{lemma}
Consider \textbf{MP0}.
  Discretize the time using the
  theta scheme. Then, the method is monotone under the CFL condition $\lambda < \frac{1}{2(1-\Theta)}$.
\end{lemma}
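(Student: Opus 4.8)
The plan is to rewrite the stabilized scheme \eqref{eq: stab. scheme} for \textbf{MP0} as an explicit system $\underline{y}_t = L\underline{y}$ and then to show that the one-step update matrix of the theta discretization is entrywise non-negative; by Definition~\ref{Monoton} this is precisely monotonicity for a linear scheme. First I would assemble $L$ by testing with the cell indicators $\mathbb{I}_{E_j}$. Since $\partial_x w_h=0$ for piecewise constants, the volume terms in \eqref{eq: ah_1d} and \eqref{eq: stab P0} drop out and every row becomes a flux balance. Away from the cut this is the standard upwind update $\tfrac{d}{dt}u_j=-\tfrac{\beta}{h}(u_j-u_{j-1})$. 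Testing with $\mathbb{I}_{\Kone}$ and $\mathbb{I}_{\Ktwo}$ and inserting $\jump{u_h}_{k-\frac12}=u_{k-1}-u_{\Kone}$ yields
\[
  \alpha h\,\tfrac{d}{dt}u_{\Kone}=-\beta(1-\etaKone)(u_{\Kone}-u_{k-1}),
\]
\[
  (1-\alpha)h\,\tfrac{d}{dt}u_{\Ktwo}=\beta\big[-u_{\Ktwo}+(1-\etaKone)u_{\Kone}+\etaKone u_{k-1}\big].
\]
Because $1-\etaKone=\min(\tfrac{\alpha}{2\lambda},1)$, the first equation shows the $\Deltat$-scaled transport rate on the small cell $\Kone$ is capped at $\tfrac12$ independently of $\alpha$, while the second shows the excess inflow is passed on to $\Ktwo$. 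I would also record that each row of $L$ sums to zero, i.e.\ $L\mathbf{1}=0$ (conservation / preservation of constants).

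Next I would write the theta scheme as $(I-\Theta\Deltat L)\underline{y}^{n+1}=(I+(1-\Theta)\Deltat L)\underline{y}^{n}$, so that $\underline{y}^{n+1}=M\underline{y}^n$ with $M=B^{-1}C$, $B:=I-\Theta\Deltat L$ and $C:=I+(1-\Theta)\Deltat L$. Since $M$ is the Jacobian of the linear update, monotonicity reduces to $M\ge0$ entrywise, for which it suffices to show $B^{-1}\ge0$ and $C\ge0$. All off-diagonal entries of $\Deltat L$ are non-negative (the incoming-flux weights $\lambda$, the capped rate on $\Kone$, and the two split weights $(1-\etaKone),\etaKone$ on $\Ktwo$), so $B$ is a Z-matrix with positive diagonal; moreover $B\mathbf{1}=\mathbf{1}$ from $L\mathbf{1}=0$, which makes each row sum of $B$ equal to $1>0$, i.e.\ $B$ is strictly diagonally dominant with $D=I$. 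Hence $B$ is an M-matrix (Definition~\ref{M-Matrix_Definition}) and $B^{-1}\ge0$ by Lemma~\ref{M-Matrix_Lemma}.

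Finally, for $C=I+(1-\Theta)\Deltat L$ the off-diagonal entries are non-negative automatically, so only the diagonal entries constrain the step size: they stay non-negative exactly when $(1-\Theta)$ times each cell's $\Deltat$-scaled rate is at most $1$. That rate is $\lambda$ on standard cells, at most $\tfrac12$ on $\Kone$ (never binding), and $\tfrac{\lambda}{1-\alpha}$ on $\Ktwo$. The last is the binding one, giving $\lambda\le\tfrac{1-\alpha}{1-\Theta}$; since $\alpha\in(0,\tfrac12]$ the worst case is $\alpha=\tfrac12$, which is guaranteed by $\lambda<\tfrac{1}{2(1-\Theta)}$. Then $C\ge0$ and $M=B^{-1}C\ge0$ as a product of non-negative matrices, proving monotonicity. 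I expect the only genuine work to be the sign bookkeeping in the two cut-cell rows and the observation that the stabilization removes $\Kone$ as the limiting cell, so that the sharp constraint comes instead from the outflow neighbor $\Ktwo$ (which can shrink to size $h/2$); the M-matrix argument is then routine.
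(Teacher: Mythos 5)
Your proposal is correct and follows essentially the same route as the paper: rewrite the theta step as $Bu^{n+1}=Cu^n$, show $B$ is an M-matrix (hence $B^{-1}\ge 0$ by Lemma~\ref{M-Matrix_Lemma}) and $C\ge 0$ under the stated CFL condition, with the binding constraint coming from the unstabilized cell $\Ktwo$ of size at least $h/2$. Your use of $L\mathbf{1}=0$ to get diagonal dominance in one stroke, and of $1-\etaKone=\min(\alpha/(2\lambda),1)$ to cover the case $\alpha\ge 2\lambda$ uniformly, are minor streamlinings of the paper's explicit row-by-row computations rather than a different argument.
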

  \begin{proof}
The theta scheme for the ODE $y_t = F(y)$ is defined as
\begin{equation}
y^{n+1}=y^n+\Deltat \left[\Theta F(y^{n+1})+(1-\Theta)F(y^n)\right],
\end{equation}
with  $\Theta\in \left[0,1\right]$
($\Theta=0:$ explicit Euler, $\Theta=1$: implicit Euler,
$\Theta=\frac{1}{2}$: implicit Trapezoidal).
Let us first assume $\alpha < 2\lambda$, which is the interesting case. Then,
$\alpha_{\Kone,\frac{1}{2}} = \frac{\alpha}{2\lambda}$.
Rewriting one time step of our stabilized discretization in matrix-vector formulation results in
\begin{equation}\label{eq: 1d lin syst monotone}
  \underbrace{(M+\Delta t \Theta (A+J))}_{=:B}u^{n+1}
  = \underbrace{(M-\Delta t (1- \Theta) (A+J))}_{=:C}{} u^n.
\end{equation}
Defining $\tau_1 :=\Deltat\Theta\beta \ge 0$ and $\tau_2:=\Deltat\left(1-\Theta\right)\beta \ge 0$, the two matrices are of the form
(entries on the main diagonal are boxed)
\begin{equation*}
  B=
  \begin{small}
    \left(
    \begin{smallmatrix}
      \diagentry{h+\tau_1} & 0 & \cdots&  & &  \cdots & 0& -\tau_1\\
      -\tau_1 & \diagentry{h+\tau_1} & \ddots & & & & & 0\\
      0 & \ddots & \ddots & & & & & \vdots\\
      \vdots & & -\tau_1\frac{\alpha}{2\lambda}& \diagentry{\alpha h+\tau_1\frac{\alpha}{2\lambda}}& & & &\\
      & & -\tau_1\left(1-\frac{\alpha}{2\lambda}\right)& -\tau_1\frac{\alpha}{2\lambda} &\diagentry{(1-\alpha)h+\tau_1} & & &\vdots \\
      \vdots & & & & \ddots& \ddots& & 0\\
      0 &\cdots&&& \cdots & 0 &-\tau_1& \diagentry{h+\tau_1}\\
    \end{smallmatrix}\right)
  \end{small}
\end{equation*}
and
\begin{equation*}
  C=
  \begin{small}
    \left(
    \begin{smallmatrix}
      \diagentry{h-\tau_2} & 0 & \cdots&  & & \cdots & 0 & \tau_2\\
      \tau_2 & \diagentry{h-\tau_2} & 0 & & & & & 0\\
      0 & \ddots & \ddots & & & & &\vdots\\
      \vdots & & \tau_2\frac{\alpha}{2\lambda}& \diagentry{\alpha h-\tau_2\frac{\alpha}{2\lambda}} & & & &\\
      & & \tau_2\left(1-\frac{\alpha}{2\lambda}\right)& \tau_2\frac{\alpha}{2\lambda}& \diagentry{(1-\alpha)h-\tau_2} & & &\vdots \\
      \vdots & & & & \ddots& \ddots& & 0\\
      0 &\cdots& & &\cdots&0&\tau_2& \diagentry{h-\tau_2}\\
    \end{smallmatrix}\right)
  \end{small}
.
\end{equation*}
According to definition \ref{Def_monotone_coeff}, we need to show that $B^{-1}C$ is non-negative.
Following lemma \ref{M-Matrix_Lemma} we will verify that $B$ is an M-matrix and $C$ is non-negative.

Most cells use a standard first-order upwind scheme in space and the theta scheme with standard CFL condition in time
%
%
and obviously satisfy these conditions.
We will focus on the rows corresponding to cells $\Kone$ and $\Ktwo$, which differ from the remaining rows due to the stabilization term $J_h$. 
We start with matrix $B$ and the row corresponding to cell $\Ktwo$.
The diagonal entry $(1-\alpha) h +\tau_1$ is positive while the remaining entries are negative, since $\alpha < 2\lambda$.
To prove the diagonal dominance we compute:
\[
\lvert b_{\Ktwo,\Ktwo}\rvert-\sum_{\jj\neq \Ktwo}\lvert b_{\Ktwo,\jj}\rvert = \left(1-\alpha\right)h+\tau_1-\left(\tau_1-\frac{\alpha\tau_1}{2\lambda}\right)-\frac{\alpha\tau_1}{2\lambda}
=(1-\alpha)h>0.
\]
Similar calculations for the row corresponding to $\Kone$ imply that $B$ is an M-matrix. 

Next we will show positivity of $C$, considering
the CFL condition
$\lambda < \frac{1}{2(1-\Theta)}$. For a standard, equidistant cell,
the positivity of the entries is guaranteed already by the standard
CFL condition and thus also by our slightly stricter condition.
Straight-forward calculations further reveal that all entries in rows $\Kone$
and $\Ktwo$ are positive. %
We note that in particular the positivity of entry $c_{\Ktwo,\Ktwo}$
is guaranteed by the CFL condition and the fact that a lower
bound for the size of $\Ktwo$ is $\frac{h}{2}$. 

Finally, for $\alpha \ge 2\lambda$, there holds $\alpha_{\Kone,\frac{1}{2}} =1$ and therefore
the factor $\etaKone = 1-\alpha_{\Kone,\frac{1}{2}}$ in the definition of $J_h$ evaluates to 0, i.e.,
we would not stabilize. But as in this case $c_{\Kone,\Kone} = \alpha h - \tau_2 \ge \lambda h \Theta \ge 0$, all required properties of
$B$ and  $C$ would hold true without stabilization.
This concludes the proof.
\end{proof}

\begin{remark}[Ghost penalty stabilization]
Using the stabilization \eqref{eq: ghost penalty stab} instead of the $J_h$ given in \eqref{eq: stab P0},
this would result in additional $2 \times 2$ blocks in the matrices
$B$ and $C$ at positions $((\kk-1):\Kone,(\kk-1):\Kone)$ and
$(\Kone:\Ktwo,\Kone:\Ktwo)$, respectively. We note that it is {\em not}
possible to find factors $\rho_1$ and $\rho_2$ that guarantee that all entries of $C$ are non-negative for $\alpha \to 0$.
In contrast, our stabilizing $2 \times 2$ block has been shifted to $(\Kone:\Ktwo,(\kk-1):\Kone)$. This
shifted stabilization reflects the fact that for the hyperbolic equations there is a designated direction of information transport, whereas for elliptic problems there is not. This shifted stabilization is also a major difference to the stabilization terms used by other authors \cite{Massing2018, Sticko_Kreiss}.
  \end{remark}

\subsubsection{$L^1$ stability and TVD stability}\label{Kapitel_L1}
Next, we will show that \textbf{MP0} with {\em explicit Euler} in time is
$L^1$ stable as well as TVD stable (to be defined below) under the standard CFL condition,
independent of the size of $\alpha$.

On standard cells away from the cut cells $\Kone$ and $\Ktwo$, the scheme given by \eqref{eq: stab. scheme} 
in combination with
explicit Euler in time corresponds to the standard upwind scheme \cite{LeVeque1992}. We will assume for the rest of this subsection that $\alpha < 2\lambda$. If this was
not the case, we would still have a non-uniform mesh, but would no longer violate the CFL condition on cell $\Kone$ needed for $L^1$ and 
TVD stability and the results in the following will remain valid. 
For $\alpha < 2\lambda$,
we get the following formulae in the neighborhood of the small cut cell $\Kone$ after a minor reordering and simplification of the terms
\begin{subequations}
\begin{align}
u^{n+1}_{\kk-1}&=u^{n}_{\kk-1}-\lambda\left( u^{n}_{\kk-1}-u^{n}_{\kk-2}\right) ,\label{eq: P0_kk-1}\\
  u^{n+1}_{\Kone}&=\frac{1}{2}u^n_{\Kone} + \frac{1}{2}u^n_{\kk-1},\label{eq: P0_kk}\\
  u^{n+1}_{\Ktwo}&=\left( 1- \frac{\lambda}{1-\alpha} \right) u^{n}_{\Ktwo} + \frac{\alpha}{2(1-\alpha)}u^{n}_{\Kone} + \frac{\lambda-\frac{\alpha}{2}}{1-\alpha} u^{n}_{\kk-1}, \label{eq: P0_kk+1}\\
u^{n+1}_{\kk+1} &=u^{n}_{\kk+1}-\lambda\left( u^{n}_{\kk+1}-u^{n}_{\Ktwo}\right) .\label{eq: P0_kk+2}
\end{align}
\end{subequations}
%
%
\begin{lemma}
  Consider \textbf{MP0} with explicit Euler in time.
Then, for $\lambda<\frac{1}{2}$ there holds
\begin{align*}
\normLone{u^{n+1}}\le \normLone{u^n}\quad \forall n \ge 0.
\end{align*}
\end{lemma}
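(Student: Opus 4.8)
The plan is to read one explicit Euler step off the update formulas \eqref{eq: P0_kk-1}--\eqref{eq: P0_kk+2} and combine nonnegativity of the stencil coefficients (which is the monotonicity already established) with the mass conservation of the scheme. Concretely, for explicit Euler one full step is a linear map $u^{n+1}_j=\sum_l c_{j,l}\,u^n_l$, where away from the cut cell $c_{j,l}$ is the standard upwind stencil $u^{n+1}_j=(1-\lambda)u^n_j+\lambda u^n_{j-1}$, and the only nonstandard entries are those coupling $u^n_{k-1},u^n_{\Kone},u^n_{\Ktwo}$ read off from \eqref{eq: P0_kk}--\eqref{eq: P0_kk+2}. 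Writing $h_j$ for the length of cell $j$, so that $h_{\Kone}=\alpha h$ and $h_{\Ktwo}=(1-\alpha)h$, the weighted $L^1$ norm is $\normLone{u^n}=\sum_j h_j\abso{u^n_j}$.

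First I would record two structural properties of $(c_{j,l})$. The first is \emph{nonnegativity}, $c_{j,l}\ge 0$. For the standard rows this holds since $\lambda<\tfrac12<1$; for the cut-cell rows it is exactly the monotonicity established above, the coefficient $\tfrac{\lambda-\alpha/2}{1-\alpha}$ of $u^n_{k-1}$ in \eqref{eq: P0_kk+1} being nonnegative under the standing assumption $\alpha<2\lambda$, and the diagonal coefficient $1-\tfrac{\lambda}{1-\alpha}$ being nonnegative because $\lambda<\tfrac12\le 1-\alpha$. The second is the \emph{weighted column-sum identity}
\[
  \sum_j h_j\, c_{j,l} = h_l \qquad\text{for every column } l,
\]
which is precisely local mass conservation of the scheme: it follows from the conservation property of the standard DG form \eqref{eq:aupw} together with the stabilization lemma above, and may alternatively be checked by a one-line computation on the three affected columns $l\in\{k-1,\Kone,\Ktwo\}$.

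Given these two facts the estimate is immediate. Using $u^{n+1}_j=\sum_l c_{j,l}u^n_l$, pulling the absolute value inside via the triangle inequality (legitimate since $c_{j,l}\ge 0$), and interchanging the order of summation yields
\begin{align*}
  \normLone{u^{n+1}}
  = \sum_j h_j\,\abso{\sum_l c_{j,l}u^n_l}
  \le \sum_l \abso{u^n_l}\Bigl(\sum_j h_j\,c_{j,l}\Bigr)
  = \sum_l h_l\,\abso{u^n_l}
  = \normLone{u^n},
\end{align*}
where the column-sum identity collapses the inner sum to $h_l$.

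I do not expect a genuine obstacle, as the argument is essentially bookkeeping once the two structural properties are in place. The only point needing care is the nonnegativity of the cut-cell coefficients, specifically the diagonal entry $1-\tfrac{\lambda}{1-\alpha}$ of cell $\Ktwo$: this is where the sharper condition $\lambda<\tfrac12$ (rather than the usual $\lambda<1$) is genuinely required, through the worst-case lower bound $h_{\Ktwo}=(1-\alpha)h\ge h/2$ coming from $\alpha\le\tfrac12$. I would also note at the outset, as already remarked in this subsection, that restricting to $\alpha<2\lambda$ is harmless: for $\alpha\ge 2\lambda$ the factor $\etaKone$ vanishes so that cell $\Kone$ reverts to the standard upwind update under the usual CFL condition, and the same computation applies verbatim.
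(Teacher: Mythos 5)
Your proposal is correct and is essentially the paper's argument: the paper also plugs the update formulas \eqref{eq: P0_kk-1}--\eqref{eq: P0_kk+2} into $\sum_j h_j\absval{u_j^{n+1}}$, pulls out the (nonnegative) stencil coefficients via the triangle inequality, and observes that the resulting weighted sums collapse back to $\sum_j h_j\absval{u_j^n}$ --- which is exactly your column-sum identity, carried out term by term in a $T_1,\ldots,T_4$ decomposition rather than stated as a structural property. Your packaging of the collapse as ``weighted column sums equal $h_l$'' (i.e., conservation) is a slightly cleaner abstraction of the same computation, and your identification of where $\lambda<\frac{1}{2}$ and $\alpha<2\lambda$ are actually needed matches the paper.
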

We note that the required CFL condition is {\em independent} of the size of $\alpha$
(but takes into account that the bigger cut cell $\Ktwo$ is not stabilized).
\begin{proof}
We define
\begin{equation*}
\sum_\jj \lvert u_\jj^{n+1} \rvert h_\jj = \underbrace{\sum_{\jj\le \kk-1}\lvert u_\jj^{n+1} \rvert h}_{T_1} + \underbrace{\lvert u_{\Kone}^{n+1}\rvert \alpha h}_{T_2} +  \underbrace{\lvert u_{\Ktwo}^{n+1}\rvert (1-\alpha)h}_{T_3}+  \underbrace{\sum_{\jj\geq \kk+1} \lvert u_\jj^{n+1}\rvert  h}_{T_4} .
\end{equation*}
On all cells except for cells $\Kone$ and $\Ktwo$, the standard upwind scheme is used (compare also \eqref{eq: P0_kk-1} and \eqref{eq: P0_kk+2}). Plugging in the corresponding formulae and
using $\lambda > 0$ as well as $1-\lambda > 0$ gives
\[
  T_1 \le \sum_{\jj\le \kk-2}\lvert u_\jj^n\rvert h+(1-\lambda) \lvert u_{\kk-1}^n \rvert h, \qquad
  T_4 \le \sum_{\jj\geq \kk+1}\lvert u_\jj^n \rvert h+\lambda h \lvert  u_{\Ktwo}^n\rvert.
\]
For cells $\Kone$ and $\Ktwo$, we directly obtain from equations \eqref{eq: P0_kk} and \eqref{eq: P0_kk+1} (using $1-\alpha-\lambda \ge 0$ and $2\lambda-\alpha \ge 0$)
\begin{align*}
T_2 \le \frac{\alpha h}{2}\left(\lvert u^n_{\Kone}\rvert + \lvert u_{\kk-1}^n\rvert \right),  \quad
T_3 \le \left(1-\alpha-\lambda\right)h\lvert u^n_{\Ktwo}\rvert +\frac{\alpha h }{2} \lvert u_{\Kone}^n\rvert +\left(\lambda-\frac{\alpha}{2}\right)h \lvert u^n_{\kk-1}\rvert  .
\end{align*}
Summing up the estimates for $T_1,\ldots,T_4$ implies the claim.
\end{proof}

%
\begin{definition}[\cite{Cockburn2001}]\label{Def: TVDM}
A DG scheme is called {\em TVDM} (total variation diminishing in the means) if
\begin{equation}\label{TVDM}
\text{TV}(\overline{u}^{n+1}) \le \text{TV}(\overline{u}^n) \quad\text{ with }
\text{TV}(\overline{u}^n) = \sum_\jj \lvert \overline{u}_{\jj+1}^n-\overline{u}_\jj^n\rvert
\end{equation}
holds for all $n \ge 0$. Here,  $\overline{u}_\jj^n$ denotes the mean of $u$ on cell $\jj$ at time $t_n$.
\end{definition}
For piecewise constant polynomials, the means $\overline{u}^n_\jj$ correspond to the unknowns $u^n_\jj$ and TVDM coincides with the TVD (total variation diminishing) \cite{LeVeque1992} property.
(For later use for piecewise {\em linear} polynomials we provide the more general
definition here.)

%
\begin{lemma}\label{TVD_Modell1}
   Consider \textbf{MP0} with explicit Euler in time.
Then, the scheme is TVD stable for $\lambda<\frac{1}{2}$.
\end{lemma}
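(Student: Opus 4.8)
The plan is to run a Harten-type incremental-form argument directly on the cell averages, using that for piecewise constant polynomials the means coincide with the unknowns $u_\jj^n$, so that TVDM is literally TVD. Throughout I work in the interesting regime $\alpha < 2\lambda$; for $\alpha \ge 2\lambda$ the factor $\etaKone$ vanishes and the scheme is plain upwind on a non-uniform mesh that still respects the CFL condition, a case already reduced away in the introduction to this subsection. I would first introduce consecutive face differences of the old data, $a_i := u^n_{i+1}-u^n_i$ for the jump across face $i+\frac12$, and the analogous $b_i$ for $u^{n+1}$, so that the goal $\text{TV}(u^{n+1})\le\text{TV}(u^n)$ reads $\sum_i|b_i|\le\sum_i|a_i|$.

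Using the explicit update formulae \eqref{eq: P0_kk-1}--\eqref{eq: P0_kk+2} together with plain upwind on all remaining cells, I would express each new difference $b_i$ as a linear combination of the old differences $a_j$. Away from the cut cell this gives the familiar $b_i=(1-\lambda)a_i+\lambda a_{i-1}$. The crux is near $\Kone$ and $\Ktwo$, where the only delicate point is that the update of $u^{n+1}_{\Ktwo}$ reaches back to $u^n_{\kk-1}$, i.e. it skips across the cut cell. I would therefore rewrite that update as $u^{n+1}_{\Ktwo}=u^n_{\Ktwo}-\frac{\lambda}{1-\alpha}(u^n_{\Ktwo}-u^n_{\Kone})-\frac{\lambda-\alpha/2}{1-\alpha}(u^n_{\Kone}-u^n_{\kk-1})$, so that the long-range coupling is split cleanly into the two consecutive jumps across $k_{\Kcut}$ and $\kk-\frac12$.

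With this bookkeeping the new differences near the cut cell become $b_{\Kcut}=\left(1-\tfrac{\lambda}{1-\alpha}\right)a_{\Kcut}+\tfrac{1/2-\lambda}{1-\alpha}\,a_{\kk-\frac12}$ and $b_{\kk+\frac12}=(1-\lambda)a_{\kk+\frac12}+\tfrac{\lambda}{1-\alpha}a_{\Kcut}+\tfrac{\lambda-\alpha/2}{1-\alpha}a_{\kk-\frac12}$, while the inflow jump satisfies $b_{\kk-\frac12}=\tfrac12\,a_{\kk-\frac12}+\lambda\,a_{\kk-\frac32}$. Two things then remain to verify. First, all coefficients are nonnegative: this is exactly where $\lambda<\frac12$ enters, as it makes the coefficient $\tfrac{1/2-\lambda}{1-\alpha}$ of $a_{\kk-\frac12}$ in $b_{\Kcut}$ nonnegative, while $\alpha<2\lambda$ and $\alpha\le\frac12$ handle the rest. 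Second, for every old jump $a_j$ the total of its coefficients over all new jumps it feeds must be $\le 1$; I would check that each such column sum is in fact exactly $1$. The only non-obvious case is the inflow jump $a_{\kk-\frac12}$, which now contributes to $b_{\kk-\frac12}$, $b_{\Kcut}$ and $b_{\kk+\frac12}$: its column sum is $\tfrac12+\tfrac{1/2-\lambda}{1-\alpha}+\tfrac{\lambda-\alpha/2}{1-\alpha}=\tfrac12+\tfrac{(1-\alpha)/2}{1-\alpha}=1$, where the cancellation of $\lambda$ is precisely what makes the argument close.

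Finally I would apply the triangle inequality to each $|b_i|$ and regroup by old jump, obtaining $\sum_i|b_i|\le\sum_j(\text{column sum of }a_j)\,|a_j|=\sum_j|a_j|$, which is the claim. The main obstacle is the nonstandard three-point stencil created by the stabilization transporting information across the cut cell: the naive nearest-neighbor Harten estimate does not apply, and one has to verify by hand that, after splitting the long-range coupling into the two consecutive face jumps, the weight of the inflow jump redistributes with total mass exactly one. This is the step where the choice $\etaKone=1-\alpha_{\Kone,\frac12}$ (the factor $\tfrac12$ in the capacity) and the bound $\lambda<\tfrac12$ are genuinely used.
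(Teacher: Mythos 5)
Your proposal is correct and follows essentially the same route as the paper: the paper likewise works from the update formulae \eqref{eq: P0_kk-1}--\eqref{eq: P0_kk+2}, splits the total variation into contributions $T_1,\dots,T_5$ around the cut cell pair, writes each new face difference as a combination of the old differences with exactly the coefficients you derive (e.g.\ $1-\tfrac{\lambda}{1-\alpha}$ and $\tfrac{1}{2}-\tfrac{2\lambda-\alpha}{2(1-\alpha)}$ for the jump at $\Kcut$), checks non-negativity under $\lambda<\tfrac12$ and $\alpha<2\lambda$, and sums up. Your explicit column-sum bookkeeping (each old jump redistributing with total weight one, with the inflow jump $a_{\kk-\frac12}$ feeding three new jumps) is exactly what the paper's final summation implicitly verifies.
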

\begin{proof}
We decompose
\begin{align*}
\sum_j \lvert u_{\jj}^{n+1}-u_{\jj-1}^{n+1} \rvert =& \underbrace{\sum_{\jj\le \kk-1} \lvert u_{\jj}^{n+1}-u_{\jj-1}^{n+1} \rvert }_{T_1}+\underbrace{ \lvert u_{\Kone}^{n+1}-u_{\kk-1}^{n+1} \rvert }_{T_2}+\underbrace{ \lvert u_{\Ktwo}^{n+1}-u_{\Kone}^{n+1} \rvert}_{T_3}\\
&+\underbrace{ \lvert u_{\kk+1}^{n+1}-u_{\Ktwo}^{n+1} \rvert }_{T_4}+\underbrace{\sum_{\jj\geq \kk+2} \lvert u_{\jj}^{n+1}-u_{\jj-1}^{n+1} \rvert}_{T_5}.
\end{align*}
For the standard parts of the scheme, we get
\[
  T_1\le \sum_{\jj\le \kk-2}  \lvert u_{\jj}^{n}-u_{\jj-1}^{n} \rvert +(1-\lambda) \lvert u_{\kk-1}^{n}-u_{\kk-2}^{n} \rvert, \quad
  T_5 \le \sum_{\jj\geq \kk+2} \lvert u_{\jj}^{n}-u_{\jj-1}^{n}\rvert +\lambda \lvert u_{\kk+1}^{n}-u_{\Ktwo}^{n}\rvert .
\]
Using \eqref{eq: P0_kk-1} and \eqref{eq: P0_kk}, a direct substitution of the formulae results in
\[
T_2  \le \frac{1}{2}\lvert u^n_{\Kone}-u^n_{k-1}\rvert +\lambda\lvert u^{n}_{k-1}-u^{n}_{k-2}\rvert .
\]
For $T_3$ and $T_4$, we reorder the terms resulting from the definitions \eqref{eq: P0_kk}-\eqref{eq: P0_kk+2} to get
\begin{align*}
T_3 &\le \left(1-\frac{\lambda}{1-\alpha}\right) \lvert u^n_{\Ktwo}-u^n_{\Kone} \rvert +\left(\frac{1}{2}-\frac{2\lambda-\alpha}{2(1-\alpha)}\right)\lvert u^n_{\Kone}-u^n_{\kk-1}\rvert ,\\
T_4 &\le \left(1-\lambda\right)\lvert u^n_{\kk+1}-u^n_{\Ktwo}\rvert +\frac{\lambda}{1-\alpha}\lvert u^n_{\Ktwo}-u^n_{\Kone}\rvert +\frac{2\lambda-\alpha}{2(1-\alpha)}\lvert  u^n_{\Kone}-u^n_{\kk-1}\rvert .
\end{align*}
We emphasize that all factors outside of the absolute values are non-negative due to the made assumptions.
Summing up the estimates for $T_1,\ldots,T_5$ implies the claim.
\end{proof}
\begin{remark}
  On an \emph{equidistant} mesh, a standard finite volume scheme that is monotone is automatically TVD \cite{HoldenRisebro}.
  As the standard proof for this result does not transfer to our situation, we provide here the proof for TVD stability in addition. 
\end{remark}

 \subsection{Piecewise linear polynomials}
 In this subsection we examine \textbf{MP1} more closely. 
 In this case, the stabilization term $J_h$ is given by \eqref{eq: J_h in 1d general}.
For simplicity, we will assume in this subsection that we use a
centered, rescaled moment basis, see
also section \ref{subsec: local shape fct}.
In time, we will consider the second-order explicit TVD RK scheme given by \eqref{eq: 2nd order TVD RK}.



 \subsubsection{Eigenvalue analysis}\label{Kapitel_EW}
 We now want to motivate the stability of our stabilized scheme in combination with the second-order explicit RK scheme \eqref{eq: 2nd order TVD RK}
 by means of an eigenvalue analysis using symbolic computations with \texttt{sympy}\cite{10.7717/peerj-cs.103}. We consider the model problem \textbf{MP} for the special case
 of $N=5$, i.e., we start with four equidistant cells of length $h$ and split the third cell in two cells of length $\alpha h$ and length $(1-\alpha)h$. We use
 Dirichlet boundary conditions and WLOG $\beta = 1$. 
 
 We consider the stabilized scheme \eqref{eq: stab. scheme} with the stabilization term $J_h$ given by
 \eqref{eq: J_h in 1d general}. For the construction of the stabilization term $J_h$, we have made several
 design choices, e.g., the choices of $\omega$ and $\etaKone$ and the general structure of the terms. Here, we will focus on the parameter $\rho$. We will
 show that $\rho=\frac{1}{2}$ is exactly the value that one needs to make explicit time stepping stable again.

 \begin{figure}
   \centering
   \includegraphics[height=0.31\linewidth]{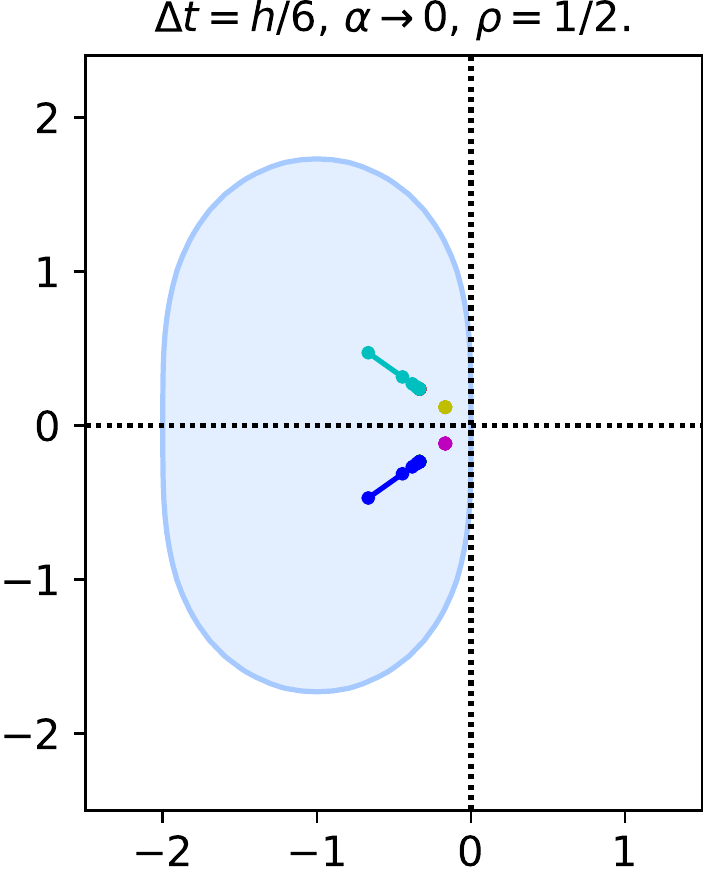}\hspace*{0.1\linewidth}
   \includegraphics[height=0.31\linewidth]{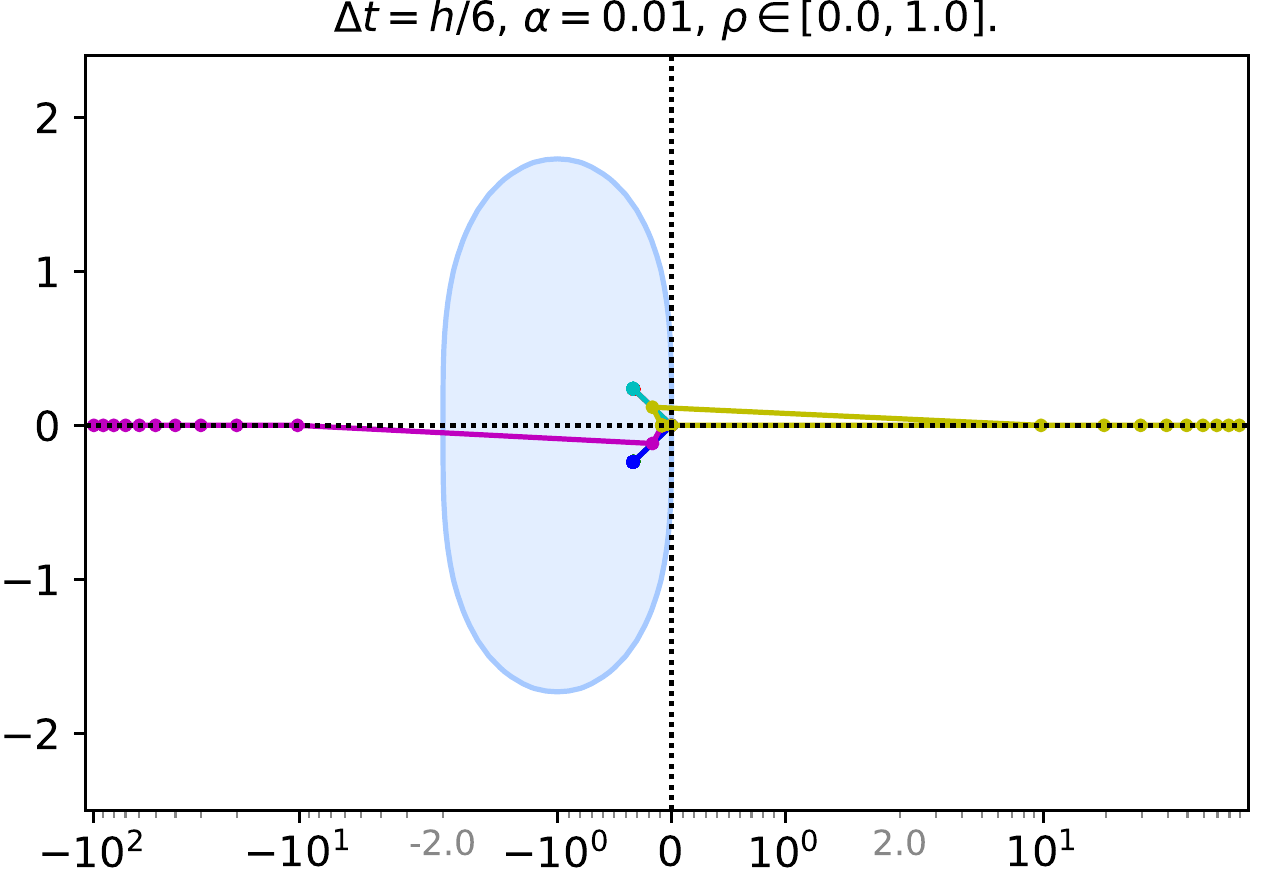}
   \caption{Eigenvalues of \eqref{eq: eigenval stab}: 
    The stability region of the explicit RK scheme is shown in blue. The colors
    refer to the different eigenvalues and show their evolution while
    changing the parameters $\alpha$ or $\rho$.
     \emph{Left:} Behavior for decreasing $\alpha$ ($\alpha = 2^{-i}, i \in
     [ 1, \ldots, 10], \rho = \frac 1 2)$. 
     \emph{Right:} Behavior for varying $ \rho$
     ($\rho \in [0,1], \alpha = 10^{-2})$.
     }
   \label{fig:evanalysis-plot}
 \end{figure}

We rewrite the variational formulation \eqref{eq: stab. scheme} as a system of ODEs with 
$M$ being the mass matrix, $A$ being the stiffness matrix incorporating $a_h^{\text{upw}}(\cdot,\cdot)$, and $J$ incorporating the corresponding
parts of $J_h(\cdot,\cdot)$.  
%
%
%
%
%
 We then symbolically setup the operator 
 \begin{equation}\label{eq: eigenval stab}
   R=M^{-1} (M - \Deltat (A+J)) - \operatorname{Id}
   ,
 \end{equation}
 which is the stability function used in the ODE stability analysis.
 As $\alpha \le \frac 1 2$, a lower bound for the size of the
 unstabilized cut cell $\Ktwo$ is $\frac h 2$. The limiting CFL number $\lambda$ for piecewise linear polynomials
 is then $\lambda=\frac{1}{6}$. We therefore choose $\Deltat = \frac{h}{6}$ to compute
 the eigenvalues of $R$. We find three pairs of complex eigenvalues:
 \begin{align*}
     \lambda_{1,2} & = - \frac{2 \pm \sqrt{2} i}{6}, \\
     \lambda_{3,4} & = \frac{2 \pm \sqrt{2} i}{6 \left(\alpha - 1\right)}, \\
     \lambda_{5,6} & = {\textstyle\frac{1 + 6 \alpha \rho - 5 \alpha -
                     2 \rho \pm \sqrt{36 \alpha^{2} \rho^{2} - 48
                     \alpha^{2} \rho + 13 \alpha^{2} - 24 \alpha
                     \rho^{2} + 28 \alpha \rho - 8 \alpha + 4 \rho^{2}
                     - 4 \rho + 1}}{2 \alpha}}.\\
   \intertext{
 The first four eigenvalues are not critical: $\lambda_{1,2}$ are
 independent of $\alpha$ and $\lambda_{3,4}$ converge to $\lambda_{1,2}$ for $\alpha \rightarrow 0$. 
 The behavior of the last two
 eigenvalues $\lambda_{5,6}$ is unclear, as they might diverge for
 $\alpha \rightarrow 0$, depending on the choice of $\rho$. We observe
 that for $\rho = \frac 1 2$ the numerator becomes independent of
 $\alpha$ resulting in}
   \lambda_{5,6} &= - \frac{2 \pm \sqrt{2} i}{2}.
 \end{align*}

 For a sequence of decreasing $\alpha$, we visualize the eigenvalues of
 $R$ and observe that for $\Deltat = \frac h  6$ all eigenvalues
 stay within the stability region of the explicit second-order TVD RK scheme
 \eqref{eq: 2nd order TVD RK} (figure
 \ref{fig:evanalysis-plot} left), indicating the stability of the
 explicit time stepping scheme.
 For fixing $\alpha=0.01$ and testing different choices of $\rho$, we observe
 (figure \ref{fig:evanalysis-plot} right) that two eigenvalues diverge.
 These observations support our
 choice of $\rho=\frac 1 2$ and are consistent with our numerical experiments, in which we never observed stability
 issues due to using an explicit time stepping scheme on tiny cut cells.

 \subsubsection{TVDM stability}\label{Kapitel_TVDM P1}
 We can only expect to obtain a TVDM stability result for piecewise linear polynomials if we apply a limiter. Different to 2D, in 1D we only reconstruct to cell
 faces instead of to neighboring centroids, which results in an MC like limiter \cite{LeVeque1992}. (In 2D, depending on the mesh, this could result in a limiter that is not linearity-preserving.) 
 By assumption, our solution $u^*_j$ on cell $j$
 resulting from solving \eqref{eq: stab. scheme} with the time stepping scheme \eqref{eq: 2nd order TVD RK},
 where $(\cdot)^*$ stands for $(\cdot)^{(1)}$ or $(\cdot)^{n+1}$, uses the centered moment basis
 \begin{equation}\label{eq: basis for limiting}
  \ubar^*_{j} +  \nabla u^*_j (x-x_{j}).
   \end{equation}
Then, we enforce
 \begin{equation}\label{eq: MC in 1d}
\nabla u^*_j = \text{minmod} \:  \left( \nabla u^*_j, \frac{\ubar^*_{j+1}-\ubar^*_{j}}{x_{j+1/2}-x_j} ,  \frac{\ubar^*_{j}-\ubar^*_{j-1}}{x_{j}-x_{j-1/2}}\right)
\end{equation}
with
\[
  \text{minmod} \: (a_1, \ldots, a_m) = \begin{cases} s \min_{1\le i \le m} \lvert a_i \rvert, & \lvert s \rvert =1, s =\frac{1}{m} \sum_{i=1}^n \text{sgn}\: (a_i) ,\\
  0, & \text{otherwise}.\end{cases}
  \]

 \begin{lemma}\label{lem: TVDM P1 Euler}
Consider \textbf{MP1} with explicit Euler in time.
Assume that the moment basis 
\eqref{eq: basis for limiting} 
and limiter \eqref{eq: MC in 1d} is used, which has been modified on cell $\kk-1$ to additionally enforce
  \begin{equation}\label{eq: cond limiting cut cell neigh text}
    \min \left(\ubar_{\kk-1}^n, \ubar_{\Kone}^n \right)
    \le u_{\kk-1}(x_{\Kcut}) \le \max \left(\ubar_{\kk-1}^n, \ubar_{\Kone}^n \right).
  \end{equation}
Then, for $\lambda<\frac{1}{4}$ the scheme is TVDM stable.
\end{lemma}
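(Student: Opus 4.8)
The plan is to mirror the direct total-variation estimate of Lemma \ref{TVD_Modell1}, the new feature being that the limited slopes now enter the fluxes. Since TVDM involves only the means, I first extract the update of $\ubar_\jj$ by testing the stabilized scheme \eqref{eq: stab. scheme} with the indicator of each cell: the volume part of $J_h$ in \eqref{eq: J_h in 1d general} drops out because the indicator has zero gradient, leaving the upwind face fluxes together with the outflow contribution of $J_h$ at $x_{\Kcut}$. On every standard cell this yields the slope-limited upwind update $\ubar_\jj^{n+1} = \ubar_\jj^n - \lambda\bigl(u_h(\xjminus) - u_h(x_{\jj-\frac12}^-)\bigr)$ with the limited one-sided face values, so away from the cut cell the estimate is the classical computation producing the analogues of the bounds for $T_1$ and $T_5$ in Lemma \ref{TVD_Modell1}, now under the tighter CFL $\lambda < \tfrac14$ forced by the slope reaching both cell faces.

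The observation that organizes the cut-cell contribution is that the bracket appearing in \eqref{eq: J_h in 1d general} satisfies
\begin{equation*}
  \jump{u_h}_{\kk-\frac12} + \alpha h\,\jump{\partial_x u_h}_{\kk-\frac12}
  = u_{\kk-1}(x_{\Kcut}) - u_h(x_{\Kcut}^-),
\end{equation*}
i.e.\ it is the difference between the cell $\kk-1$ reconstruction extrapolated to the outflow face $x_{\Kcut}$ and the local value there. Hence the total flux leaving $\Kone$ through $x_{\Kcut}$ is the convex combination
\begin{equation*}
  \beta\Bigl(\alpha_{\Kone,\frac12}\,u_h(x_{\Kcut}^-)
  + (1-\alpha_{\Kone,\frac12})\,u_{\kk-1}(x_{\Kcut})\Bigr)
\end{equation*}
of the local value and the upstream extrapolation, with weight $\alpha_{\Kone,\frac12} = \tfrac{\alpha}{2\lambda}$ from \eqref{eq: def eta 1d}; this reproduces exactly the mean updates \eqref{eq: P0_kk} and \eqref{eq: P0_kk+1} with the bare means replaced by these reconstructed face values. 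The limiter supplies the bounds needed to control them: the minmod rule \eqref{eq: MC in 1d} places every ordinary one-sided face value between its two adjacent means, and the extra constraint \eqref{eq: cond limiting cut cell neigh text} is precisely what forces the extrapolated value $u_{\kk-1}(x_{\Kcut})$ to lie between $\ubar_{\kk-1}^n$ and $\ubar_{\Kone}^n$ — which is why that constraint is imposed on cell $\kk-1$.

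With these bounds I would carry out the term-by-term estimate of $\sum_\jj \lvert \ubar_\jj^{n+1} - \ubar_{\jj-1}^{n+1}\rvert$ as in Lemma \ref{TVD_Modell1}, splitting it into the standard part and the three differences across $x_{\kk-\frac12}$, $x_{\Kcut}$ and $x_{\kk+\frac12}$. Inserting the convex-combination flux and writing each half-cell slope term as a factor in $[0,1]$ times an adjacent mean difference, one has to verify that every contribution carries a non-negative weight; summing the triangle-inequality bounds, the weights accumulated on each old mean difference then recombine to at most one, which yields $\text{TV}(\ubar^{n+1}) \le \text{TV}(\ubar^n)$.

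I expect the main obstacle to be the unstabilized downstream cell $\Ktwo$. It may be as small as $\tfrac{h}{2}$, it is not stabilized, and through $x_{\Kcut}$ it absorbs both the ordinary upwind flux and the redistributed stabilization flux, each involving limited slopes. Showing that its coefficients stay non-negative and correctly normalized uniformly in $\alpha \in (0,\tfrac12]$ is exactly where the CFL $\lambda < \tfrac14$ and the lower bound $\lvert \Ktwo\rvert \ge \tfrac{h}{2}$ must be used together, in direct analogy with the delicate positivity of the entry $c_{\Ktwo,\Ktwo}$ in the monotonicity lemma; the slope terms are what tighten the admissible CFL from the $\tfrac12$ of the piecewise constant case to $\tfrac14$.
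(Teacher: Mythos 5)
Your setup follows the paper's proof: testing with cell indicators to isolate the mean updates (so the volume part of $J_h$ vanishes), decomposing $\mathrm{TV}(\ubar^{n+1})$ into a standard part plus the three differences across $x_{\kk-\frac12}$, $x_{\Kcut}$ and $x_{\kk+\frac12}$, and estimating term by term with non-negative weights. Your identity $\jump{u_h}_{\kk-\frac12}+\alpha h\jump{\partial_x u_h}_{\kk-\frac12}=u_{\kk-1}(x_{\Kcut})-u_h(x_{\Kcut}^-)$ is correct and reproduces exactly the paper's update formulae \eqref{eq:App P1 k}--\eqref{eq:App P1 k+1}: the stabilized flux through $x_{\Kcut}$ is indeed the convex combination $\beta\bigl(\tfrac{\alpha}{2\lambda}u_h(x_{\Kcut}^-)+(1-\tfrac{\alpha}{2\lambda})u_{\kk-1}(x_{\Kcut})\bigr)$, and you correctly explain why the extra constraint \eqref{eq: cond limiting cut cell neigh text} is imposed on cell $\kk-1$.

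The gap is that the decisive step — verifying that every pre-factor is non-negative — is announced but not performed, and your diagnosis of where $\lambda<\tfrac14$ is actually needed is partly off. On standard cells the slope factor of $2$ from \eqref{eq: P1_j} only forces $\lambda<\tfrac12$, not $\tfrac14$ as you claim. The cell $\Ktwo$ does contribute a constraint, via $\tfrac{\lambda}{1-\alpha}\le\tfrac12$ with $1-\alpha\ge\tfrac12$, but the genuinely delicate term is the one you pass over: the difference $\lvert\ubar_{\Kone}^{n+1}-\ubar_{\kk-1}^{n+1}\rvert$ (the paper's $T_2$). There the update of $\ubar_{\Kone}$ carries the extra contribution $-\lambda h\nabla u_{\kk-1}^n$, and showing the resulting pre-factor lies in $[0,1]$ requires combining \eqref{eq: cond limiting cut cell neigh text} (which places $\bigl(\tfrac{h}{2}+\alpha h\bigr)\nabla u_{\kk-1}/(\ubar_{\Kone}^n-\ubar_{\kk-1}^n)$ in $[0,1]$) with the sign condition $\bigl(\tfrac12+\alpha\bigr)h-2\lambda h\ge 0$, i.e.\ precisely $\lambda\le\tfrac14+\tfrac{\alpha}{2}$. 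This is the place where the additional limiter condition and the CFL bound $\lambda<\tfrac14$ interact, and without carrying out that computation (and the analogous checks for $T_3$ and $T_4$) the proof is not complete.
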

\begin{proof}
The proof follows that of  lemma \ref{TVD_Modell1}. Details are given
in appendix \ref{appx:tvdm-proof}.
\end{proof}

\begin{corollary}\label{lem: TVDM P1}
Let the assumptions of lemma \ref{lem: TVDM P1 Euler} hold true but replace the explicit Euler scheme by the
second-order scheme
   TVD RK scheme \eqref{eq: 2nd order TVD RK}. 
Then, for $\lambda<\frac{1}{4}$ the scheme is TVDM stable.
\end{corollary}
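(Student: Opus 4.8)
The plan is to exploit the well-known fact that the second-order TVD RK scheme \eqref{eq: 2nd order TVD RK} is a convex combination of forward Euler steps, so that the per-step TVDM estimate of lemma \ref{lem: TVDM P1 Euler} lifts to the two-stage scheme without any loss in the CFL constant. I would write the forward Euler update operator as $\mathcal{E}(v) := v + \Deltat F(v)$, where $F$ encodes the stabilized spatial discretization \eqref{eq: stab. scheme}, and let $\Lambda$ denote the limiter \eqref{eq: MC in 1d} together with its modification \eqref{eq: cond limiting cut cell neigh text} on cell $\kk-1$. The key preliminary observation is that $\Lambda$ acts only on the gradients $\nabla u_j$ and leaves each cell mean $\overline{u}_j$ untouched, so $\overline{\Lambda(v)}=\overline{v}$ and hence $\text{TV}(\overline{\Lambda(v)})=\text{TV}(\overline{v})$ for every discrete function $v$. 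Read at the level of means, lemma \ref{lem: TVDM P1 Euler} therefore states precisely: whenever $v$ is in limited form (i.e. $v=\Lambda(v)$), one has $\text{TV}(\overline{\mathcal{E}(v)})\le \text{TV}(\overline{v})$ for $\lambda<\frac14$.

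Next I would unfold the two RK stages with the limiter applied after each, as prescribed in section \ref{sec:discretize}: the intermediate stage is $\tilde u^{(1)} = \Lambda(\mathcal{E}(u^n))$, and the new time level is obtained from $u^{n+1} = \tfrac12 u^n + \tfrac12 \mathcal{E}(\tilde u^{(1)})$ followed by a final limiting $\tilde u^{n+1} = \Lambda(u^{n+1})$. Taking means and using that the mean operator is linear gives $\overline{u}^{n+1} = \tfrac12 \overline{u}^n + \tfrac12 \overline{\mathcal{E}(\tilde u^{(1)})}$, and the subadditivity of the total variation seminorm yields $\text{TV}(\overline{u}^{n+1}) \le \tfrac12 \text{TV}(\overline{u}^n) + \tfrac12 \text{TV}(\overline{\mathcal{E}(\tilde u^{(1)})})$.

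It then remains to bound each term on the right by $\text{TV}(\overline{u}^n)$. Since $u^n$ is the limited output of the previous step (or limited initial data), it is in limited form, so lemma \ref{lem: TVDM P1 Euler} applies to the first Euler substep and gives $\text{TV}(\overline{\mathcal{E}(u^n)}) \le \text{TV}(\overline{u}^n)$; as limiting preserves means, $\text{TV}(\overline{\tilde u^{(1)}}) = \text{TV}(\overline{\mathcal{E}(u^n)}) \le \text{TV}(\overline{u}^n)$. Crucially, $\tilde u^{(1)}$ is itself in limited form, being an output of $\Lambda$ including the modification \eqref{eq: cond limiting cut cell neigh text}, so lemma \ref{lem: TVDM P1 Euler} applies a second time to the Euler substep of the second stage, giving $\text{TV}(\overline{\mathcal{E}(\tilde u^{(1)})}) \le \text{TV}(\overline{\tilde u^{(1)}}) \le \text{TV}(\overline{u}^n)$. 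Substituting both bounds into the convex-combination estimate yields $\text{TV}(\overline{u}^{n+1}) \le \text{TV}(\overline{u}^n)$, and the concluding limiting step leaves the means unchanged, so $\text{TV}(\overline{\tilde u^{n+1}}) = \text{TV}(\overline{u}^{n+1}) \le \text{TV}(\overline{u}^n)$.

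The CFL constant is inherited verbatim because both forward Euler substeps in \eqref{eq: 2nd order TVD RK} use the full step $\Deltat$ (the SSP coefficient of this RK2 is one), so no sharpening of $\lambda<\frac14$ is needed. The one point that genuinely requires care, and the step I expect to be the main obstacle, is verifying that the limited intermediate stage $\tilde u^{(1)}$ satisfies \emph{all} hypotheses of lemma \ref{lem: TVDM P1 Euler}, in particular the special constraint \eqref{eq: cond limiting cut cell neigh text} near the cut cell; this is exactly why the modified limiter must be applied after the first RK stage and not only at the end. Everything else is the standard strong-stability-preserving argument.
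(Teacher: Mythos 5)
Your proof is correct and takes essentially the same route as the paper, whose entire proof is the one-line observation that the TVD RK scheme \eqref{eq: 2nd order TVD RK} is a convex combination of explicit Euler steps. Your version simply spells out the details the paper leaves implicit (mean-preservation of the limiter, subadditivity of the total variation of the means, and the fact that the intermediate stage must be re-limited so that lemma \ref{lem: TVDM P1 Euler} applies to the second Euler substep), all of which are sound.
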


\begin{proof}
The result follows directly from the fact that TVD RK schemes are constructed to be
convex combinations of explicit Euler steps \cite{GottliebShu}.
\end{proof}

\section{Implementation}
\label{sec:impl}
The implementation is based on the DUNE \cite{dune08:1,dune08:2}
framework, a feature rich C++ finite element library. For the cut cell
discretization we rely on the \texttt{dune-udg}
package\cite{duneudg} and its integration with
\texttt{dune-pdelab}\cite{pdelab}. The \texttt{dune-udg} module
was originally developed for the
unfitted DG method\cite{Bastian_Engwer}.
We modified it to realize the 
extended
stencil of the proposed stabilization terms $J^0_h$ and $J^1_h$.

The domain is represented as a discrete level set function that uses vertex
values, i.e., as a bi-linear finite element function. Given this
representation, cut cells and their corresponding quadrature rules are
constructed using the TPMC libary\cite{tpmc}. We note that this setup results in
a restriction of the geometry that can be realized:
it only allows for a polygonal representation of the domain and for
the numerical tests we cannot resolve smooth
geometries exactly.

\subsection{Implementing the trajectory operator \& trajectory length}
\label{sec:impl:trajectory}
In 1D, the trajectory operator $T_E$ is trivial.
%
In higher space dimensions it becomes more involved to
evaluate $T_E$. We will now describe a simple approach to compute
(approximately) the evaluation of $T_E(w)(x)$ for every point $x \in E$.

Let $E \in \Ii$.
From the unsteady problem \eqref{eq: hyp eq in 2d}, we derive a
localized stationary equation given by
\begin{alignat}{2}
  \label{eq:traj_problem}
  \scp{\beta}{\nabla \tilde{w}}&=0&\quad&\text{in }E,\\
  \tilde{w} &= \scp{\beta}{w} & &\text{on }\Ff_i(E),
\end{alignat}
and natural outflow boundary conditions on $\Ff_o(E)$. The
solution $\tilde{w}$ approximates $T_E(w)(x) \approx \tilde{w}(x)$. 
To discretize \eqref{eq:traj_problem}, we employ the bilinear form
$a_h^{\text{upw}}$
 restricted to $\Omega = E$, i.e., to a single element, which we denote by $a_E^{\text{upw}}$. 
We thus search $\tilde{w}_h\in P^m(E)$ such that
\begin{align*}
  a_E^{\text{upw}}(\tilde{w}_h, v_h) = 0\quad\forall v_h\in P^m(E).
\end{align*}

\begin{remark}[Test spaces]
Depending on the shape of the streamlines of the vector field $\beta$
and the domain setup, we might choose
$m \neq k$, i.e., use a different polynomial degree for this local subproblem.
For $\beta \vert_E  \in [P^l(E)]^2$ and $w|_{e} \in P^k(e), e \in \Ff_i(E)$, the
solution $\tilde{w}$ is also polynomial and can thus be computed
exactly.
If we assume $\beta$ to depend linearly on $x$ and use $V_h^0(\Th)$ to solve \eqref{eq: hyp eq in 2d}, then
$\jp{u_h}_e \in P^0(e)$ and $\tilde{w}_h(x) = T_E(w_h)(x)$ is exact for
$\tilde{w}_h(x) \in P^1(E)$.
\end{remark}

\begin{remark}[Fast evaluation]
  Note that \eqref{eq:traj_problem} is a linear problem. We can
  interpolate $w \vert_{\Ff_i}$ into a polynomial basis and also
  $\tilde w$ is represented in a local polynomial basis. We can therefore
  \emph{precompute} 
  local (per element) trajectory operators, mapping the coefficients
  $w_i$ onto coefficient $\tilde w_j$, and store this operator as a
  small dense matrix.
\end{remark}

The \emph{trajectory length} can be computed in a very similar way. It
consists of two steps. First, we solve a local problem, which yields
the trajectory length $\lTE(x)$ for every point $x \in \Ff_i(E)$. In the second
step we compute $T_E(\lTE)$, which allows evaluating the local length
of the corresponding trajectory for every point in $\bar E$.

To compute $\lTE(x)$ we consider the following local problem:
\begin{alignat}{2}
  \label{eq:traj_len_problem}
  -|\beta|^{-1}\scp{\beta}{\nabla \tilde{l}}&=1&\quad&\text{in }E,\\
  \tilde{l} &= 0 & &\text{on }\Ff_o(E),
\end{alignat}
and natural outflow boundary conditions on $\Ff_i(E)$. It basically
computes the path integral over 1 along the trajectory and thus yields
the trajectory length $\lTE(x) = \tilde{l}(x)$. 
Again we discretize using the weak form as derived for \eqref{eq: hyp eq in 2d} and 
then we extend $\lTE(x)$ from $x \in \Ff_i(E)$ into the cell by
computing $\tilde w$ in \eqref{eq:traj_problem}.

\subsection{Construction of local shape-functions}
\label{subsec: local shape fct}
Although in principal the choice of the local basis should not have
impact on the method itself, there are practical aspects to consider.
In particular the implementation of the limiter becomes significantly
simpler, if one can separate the average value in the cell from the
local fluctuations. Thus we choose to use a moment basis. In 1D we
have chosen the local basis $\Phi_k$ in cell $k$ (in global coordinates) as
\begin{equation}
  \label{eq:basis1D}
  \{ 1, (x-x_{k})/h_k \}, \text{ with } h_k = x_{k+\frac 1 2} - x_{k-\frac 1 2}.
\end{equation}
This allows modifying the gradient without changing the average mass in the cell.

In 2D we don't have an explicit formula, but we again construct the
basis via moments in such a way, i.e., we split into a constant
function and linear functions with average zero. The linear functions are centered
around the center of mass of the cut cell, but in contrast to the 1D
case, we don't do the additional rescaling, so that the gradients are
the same as on the background mesh.

%
%

\section{Numerical results}\label{sec:num-res}
In this section we support our theoretical findings by numerical experiments in 1D and 2D.

\subsection{Numerical results for the 1D case}
For the numerical results in 1D, we solve
\eqref{eq: stab. scheme} with the stabilization term $J_h$ defined in \eqref{eq: J_h in 1d general} using $V_h^1(\Th)$.

\subsubsection{Test 2: 1D, smooth initial data}
We consider a variant of \textbf{MP1} by changing the model problem
\textbf{MP} slightly: we split all cells between $x=0.1$ and $x=0.9$ in cut cell pairs
of length $\alpha_k h$ and $(1-\alpha_k)h$ and compare 3 different scenarios:
\begin{enumerate}
    \item \textbf{S1}: the fraction $\alpha_k$ is the same for all cells, i.e., $\alpha_k = \alpha$ (`$\alpha=$');
    \item \textbf{S2}: the cell fraction $\alpha_k$ varies and is computed as $\alpha_k = 0.1X_k+10^{-6}$ with $X_k$ being a uniformly distributed random number in $(0,1)$ (`random $\alpha$');
    \item \textbf{S3}: we do not split in cut cell pairs (`equidistant').
\end{enumerate}
We use smooth initial data $u_0(x) = \sin(2\pi x)$,
set $\beta = 1$, $\lambda = \frac{1}{6}$, and use the time stepping scheme \eqref{eq: 2nd order TVD RK}.

\begin{figure}[t]
\centering
\includegraphics[width=.41\linewidth]{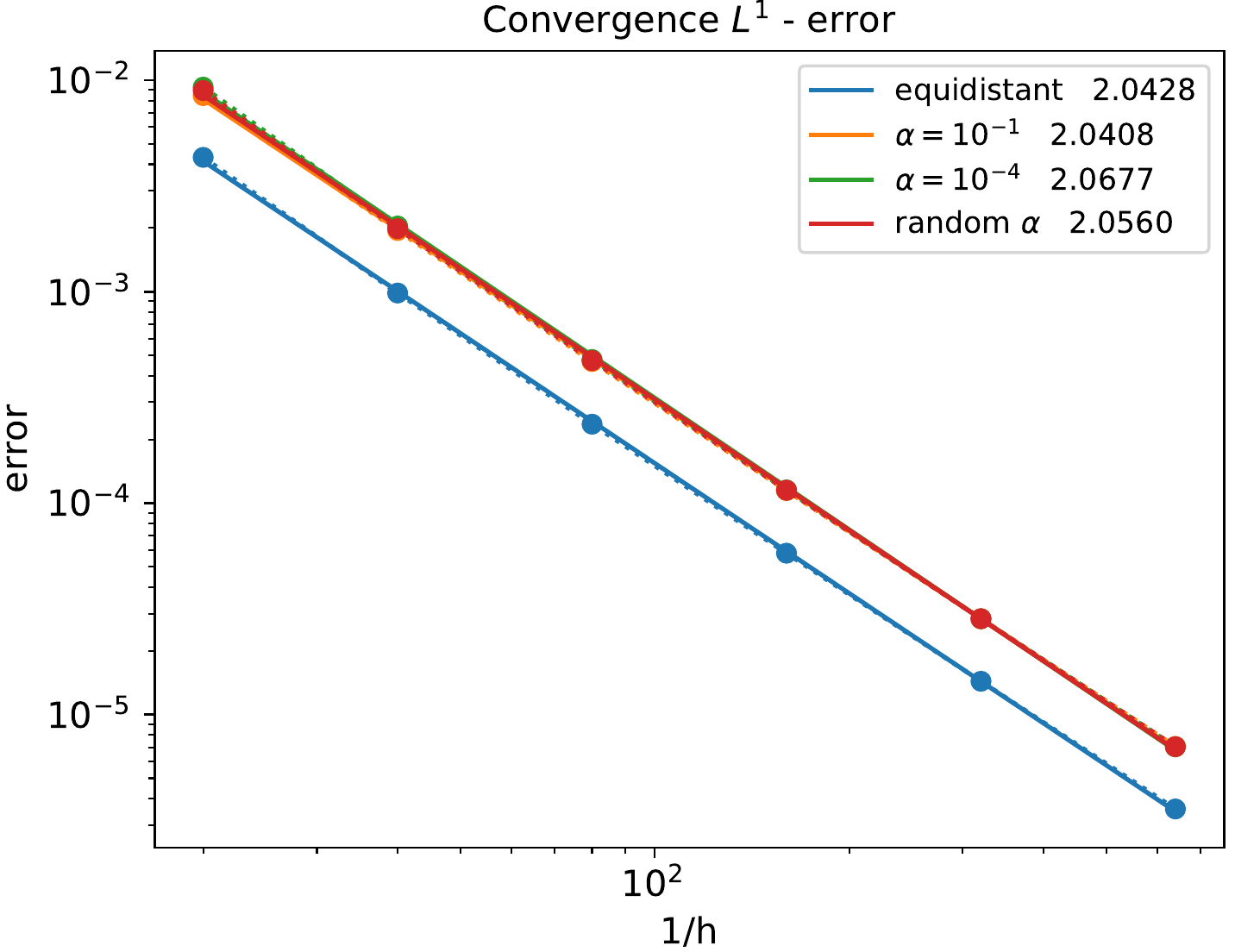}
\hspace*{.08\linewidth}
\includegraphics[width=.41\linewidth]{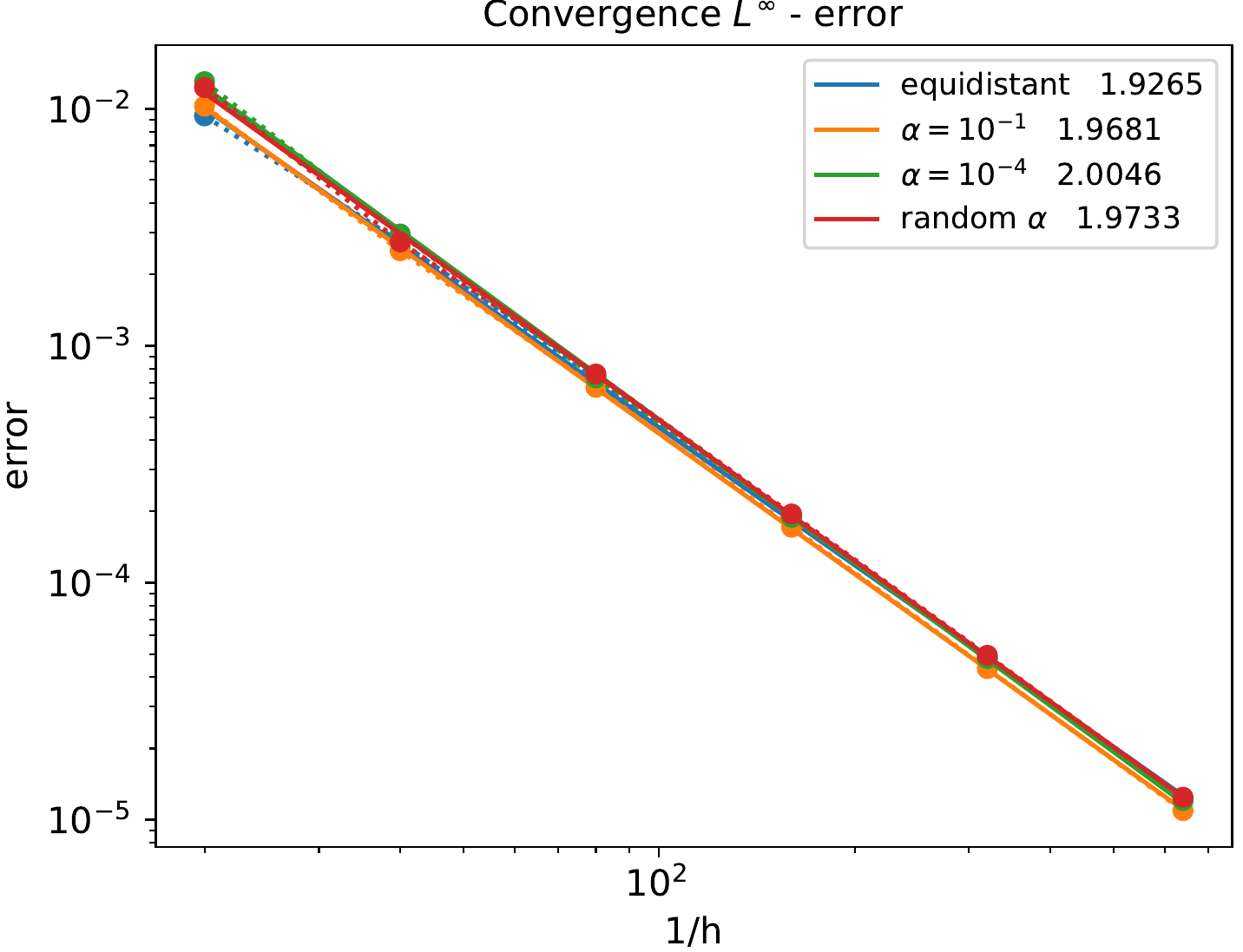}
\caption{\textbf{Test 2} (1D, smooth initial data): Optimal (second order)
  convergence rates for the error at end time $T=1$.
}
\label{fig: conv sine P1}
\end{figure}
In figure \ref{fig: conv sine P1}, the error at time $T=1$, measured
in the $L^1$ and $L^{\infty}$ norm, are shown. We observe
second-order convergence for all tested scenarios. In particular, the errors for pairs with $\alpha=0.1$ and $\alpha=10^{-4}$ are
almost the same and fairly comparable to the case of not cutting the cells.

Figure \ref{fig: 1d contour} (left) shows the solution at time $T=1$ for \textbf{S2} without and with using the
limiter described in lemma \ref{lem: TVDM P1 Euler} for a coarse mesh with $h=0.05$. Without limiter, the solution matches
very well with the exact solution. With limiter, we observe the expected peak clipping but there is no 
form of staircasing or other unwanted interaction between the limiter and the gradient stabilization.

\begin{figure}[htp]
\centering
\includegraphics[height=0.31\linewidth]{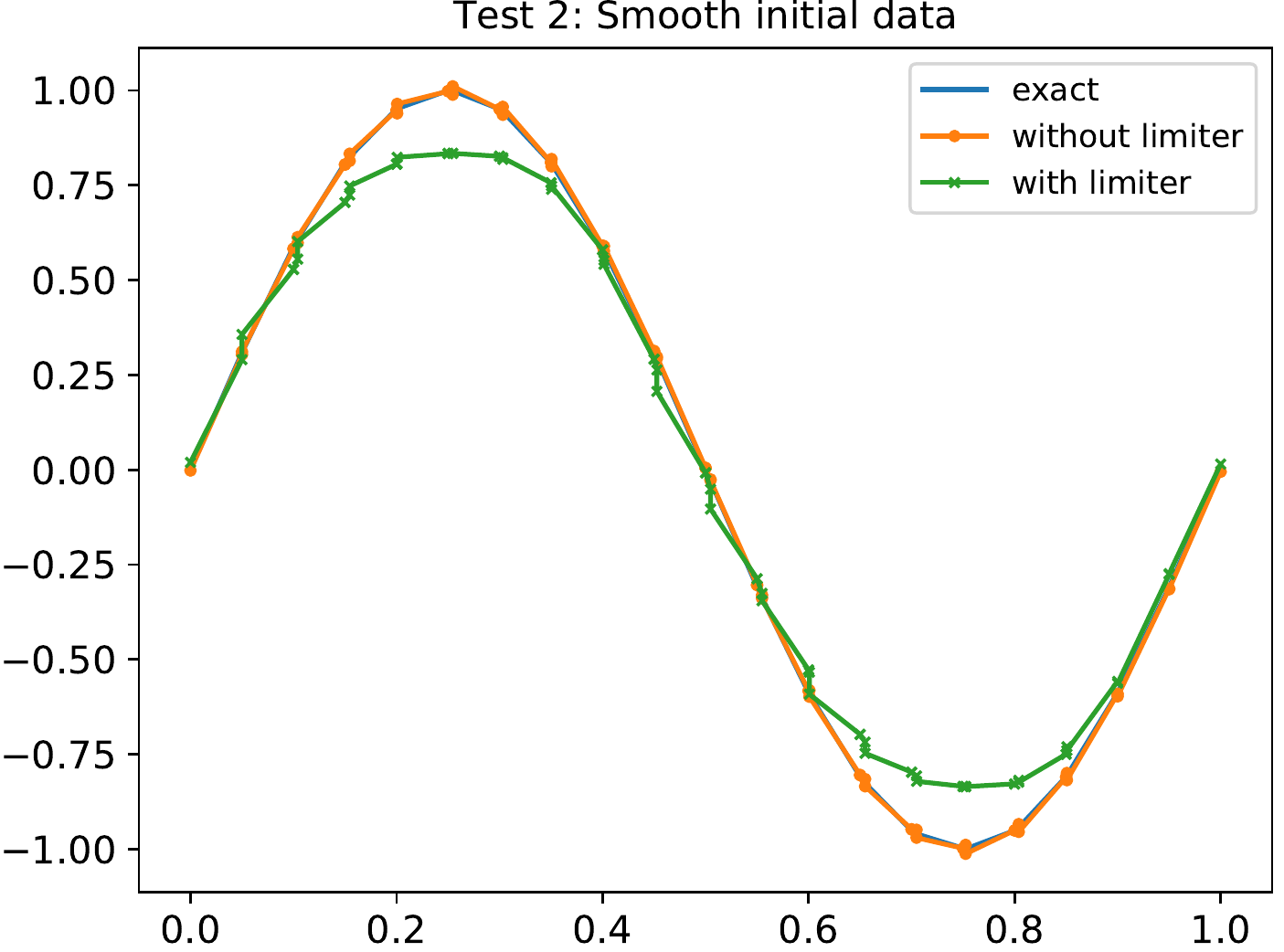}
\hspace*{0.08\linewidth}
\includegraphics[height=0.31\linewidth]{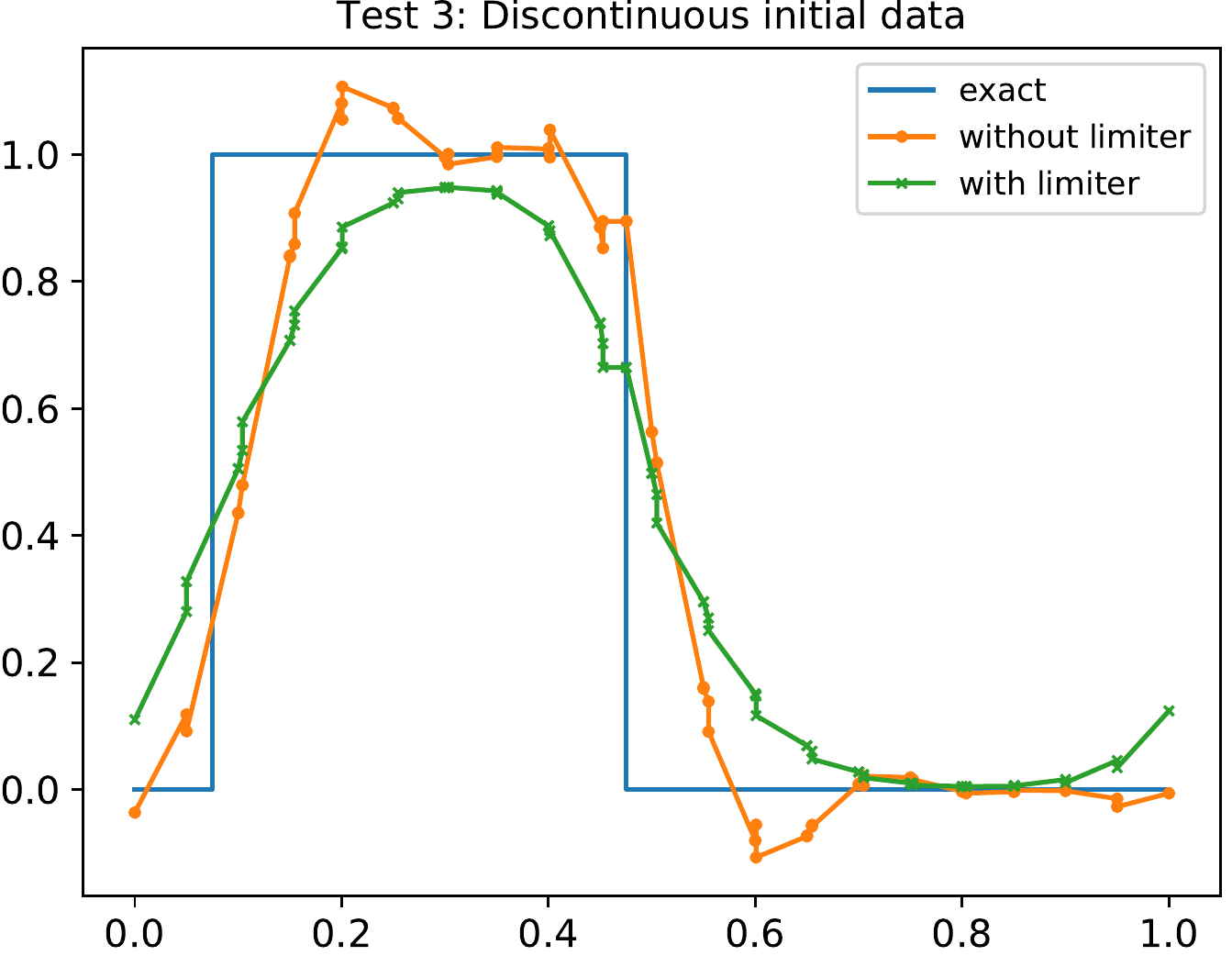}
\caption{Solutions of \textbf{Test 2} and \textbf{3} at $T=1$ for $h=0.05$ 
with and without limiter.}
\label{fig: 1d contour}
\end{figure}

\subsubsection{Test 3: 1D, discontinuous initial data}
We use the setup of \textbf{Test 2} with the scenario \textbf{S2} but use
discontinuous initial data $u_0$ given by \eqref{eq: disc init data}.
In figure \ref{fig: 1d contour} (right) we show the solution at time $T=1$ for $h=0.05$ with and without limiter.
As expected, the solution does not show overshoot when the limiter is applied. 

Additionally, we
measure $\text{TV}(\overline{u}^n)$, defined in \eqref{TVDM}, and observe that it is non-increasing during the simulation if
 the limiter is used. We note that this is not the case if we do not apply
the additional condition \eqref{eq: cond limiting cut cell neigh text}
for limiting on the inflow neighbor of a small cut cell of type $\Kone
\in \Ii$.

\subsection{Numerical results for the 2D case}
For the numerical experiments in 2D we focus on the geometry of a ramp with angle $\gamma$, see figure \ref{fig: traj op a}: we use $\OmegaBg = (0,1)^2$ and discretize it with $N\times N$ Cartesian cells, resulting in the mesh width $h = \frac{1}{N}$. Afterwards we cut out a ramp at $x=0.2001$ with angle $\gamma$ with a straight intersection, resulting in the computational domain $\Omega$. 
Our initial data $u_0$ are defined with respect to a standard Cartesian coordinate system $(x,y)$ and then transformed to our rotated and shifted coordinate system $(\hat{x},\hat{y})$ 
by using the transformation
\begin{equation}
    \begin{pmatrix} \hat{x} \\ \hat{y} \end{pmatrix} = \begin{pmatrix}\cos{\gamma}& \sin{\gamma}\\
-\sin{\gamma}&\cos{\gamma}\end{pmatrix}\cdot
\begin{pmatrix}x-0.2001\\
y\end{pmatrix}.
\end{equation}

The cut cells are located along the ramp and have various shapes and sizes. 
In our experiments, the smallest volume fractions of cut cells $E$,
computed as $\frac{\abs{E}}{h^2}$, varied typically between $10^{-5}$ and $10^{-8}$.
%
Generally the set $\Ii$ should include all cells that
are small in direction of $\beta$. In this particular setup, this
is equivalent to choosing triangular cut cells, where the cut
(i.e., the hypotenuse) is shorter than $\frac h 2$. For ease of
implementation we decided to simply use $\Ii = \{ E \in
  \Th \, | \, \frac{\abs{E}}{h^2}  < 0.1  \}$. Using this definition of $\Ii$ for $5^{\circ} \le \gamma \le 45^{\circ}$, the shortest hypotenuse of a triangle cut cell that is \emph{not} stabilized corresponds to roughly $0.6h$.

When choosing the time step size $\Deltat$, we need to take into account that the step size needs to be
appropriate for $V_h^k(\Th), k=0,1,$ for both Cartesian cells of size $h^2$ and
cut cells $E$ that are not stabilized, i.e., for cut cells $E \not\in \Ii$. Finding a tight bound 
for the latter category is non-trivial and a project in itself. We have chosen the constraint
\begin{equation}\label{eq: dt in 2d}
\Deltat \le 0.6 \frac{1}{2k+1} \frac{0.5 \: h}{\max_{ij}\lVert\beta_{ij} \rVert},
\end{equation}
which has worked well in our numerical experiments and which is in good agreement with a recently suggested CFL condition for DG schemes on
triangular meshes \cite{Krivodonova_CFL}.

For $\beta$, we choose
\begin{equation}\label{eq: vary beta NEW}
\beta^V(x,y) = \frac{1}{2\sqrt{1+\tan^2(\gamma)}}\begin{pmatrix} 1\\ \tan(\gamma)\end{pmatrix} \left( 2 + 
 (x-0.2001) \sin \gamma + y \cos(\gamma+\pi) \right).
\end{equation}
This incompressible velocity field transports the mass parallel to the ramp with decreasing speed for increasing distance to the
ramp. For comparison, we also use the constant velocity field 
\begin{equation}\label{eq: const beta NEW}
\beta^C(x,y) = \frac{2}{\sqrt{1+\tan^2(\gamma)}}\begin{pmatrix} 1\\ \tan(\gamma)\end{pmatrix}.
\end{equation}

\subsubsection{Test 4: 2D Smooth initial data}
We use the velocity field $\beta^V$,  choose smooth initial data 
\begin{equation*}
u_0(\hat{x},\hat{y})= \sin\left(\frac{\sqrt{2}\pi \hat{x}} {1-0.2001}\right),
\end{equation*}
and compute the solution at time T = 0.5 using $V_h^1(\Th)$.

\begin{figure}[t]
\centering
\includegraphics[width=.41\linewidth]{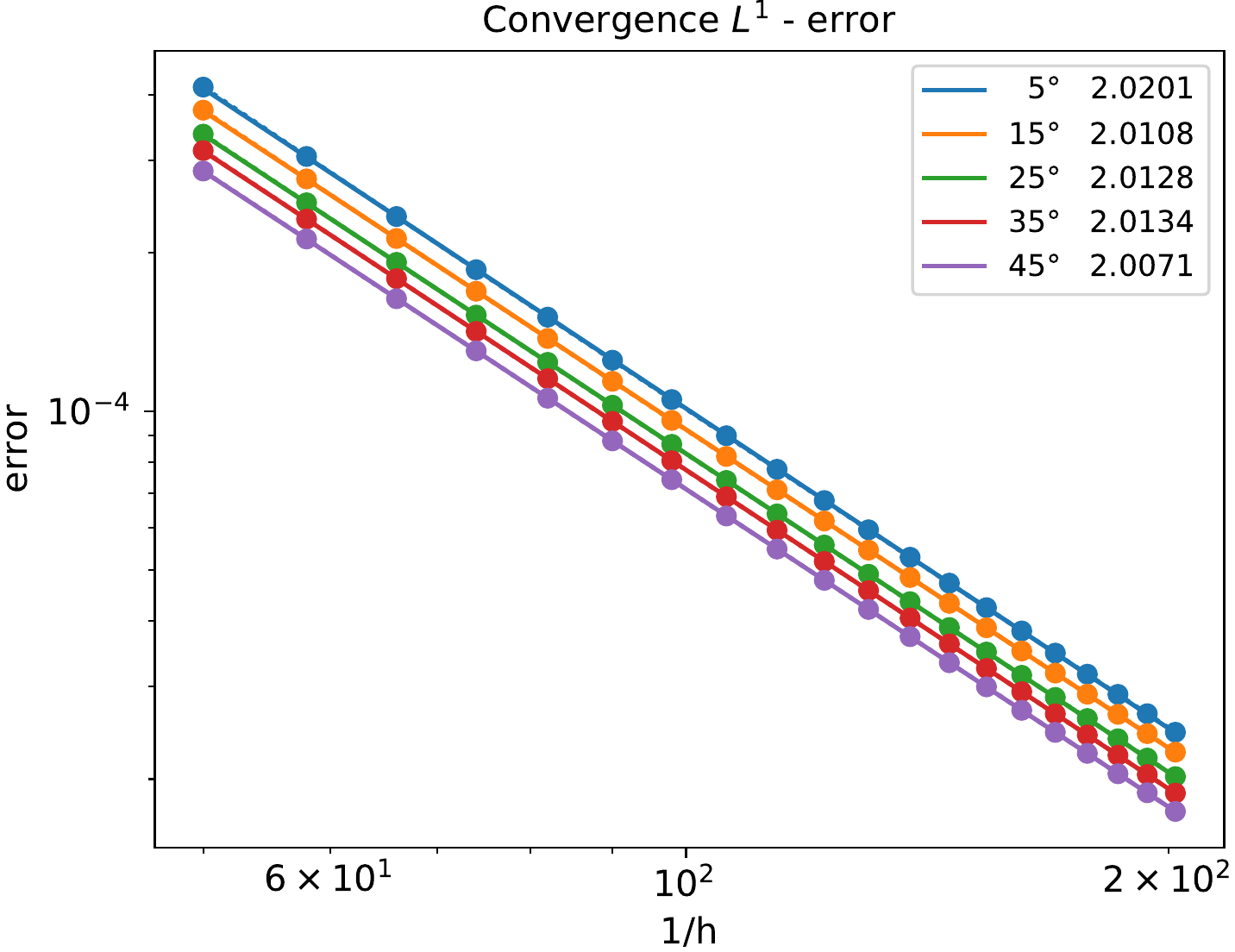}
\hspace*{.08\linewidth}
\includegraphics[width=.41\linewidth]{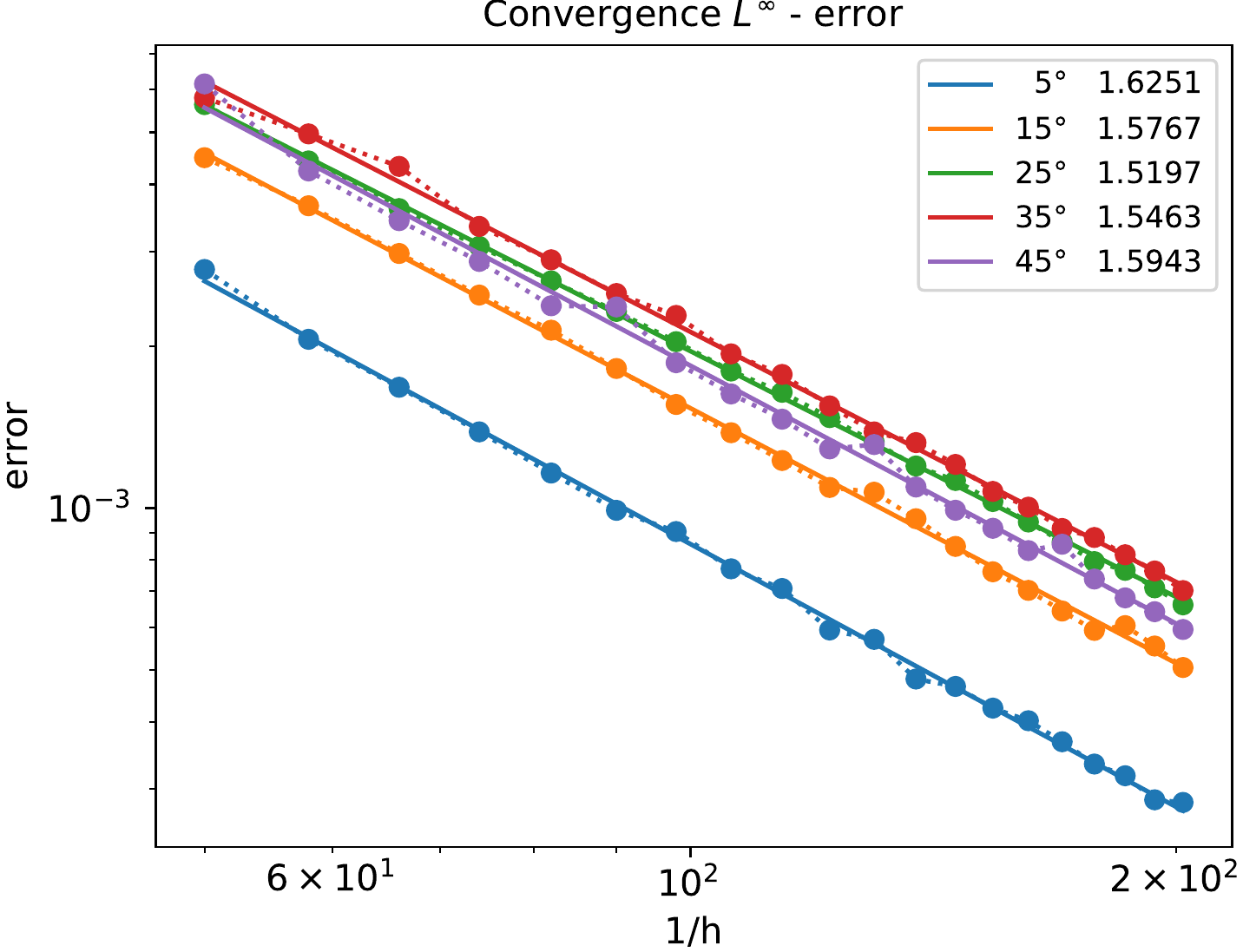}
\caption{\textbf{Test 4} (2D, smooth initial data): $L^1$ and $L^\infty$
  convergence rates for the error at end time $T$ for various angles
  $\gamma$. Reduced $L^\infty$ order, compared to the 1D tests.}
\label{fig:convergence_2D_varying NEW}
\end{figure}

Figure \ref{fig:convergence_2D_varying NEW} shows the error at time $T$ in the $L^1$ and $L^{\infty}$ norm. We observe second-order convergence in the $L^1$ norm. Due to the irregularity of the cut cells, the $L^{\infty}$ error is \emph{not smooth}, 
indicated by the zig-zag behavior of the plotted results. We therefore use a least squares fit 
to compute the convergence rates. The rates vary, depending on the angle, and lie between 1.52 and 
1.63. This slightly reduced order of convergence will need to be examined in more detail in the future. 
In general it is challenging to achieve full second-order accuracy on cut cells as the sizes of neighboring cells
differ by several orders of magnitude and therefore errors of neighboring cells do not cancel the same way as on a structured mesh.

\begin{table}[h]
\begin{center} {\small
\caption{Convergence rates ($L^1$ and $L^{\infty}$ norm) for \textbf{Test 4} for constant
  ($\beta^C$) and varying ($\beta^V$) velocity field.}
\label{Table: conv rates 2d ramp NEW}
\begin{tabular}{rcccccccccc}
  \toprule
 \multicolumn{2}{r}{Angle $\gamma$:}
  & 5$^{\circ}$ & 10$^{\circ}$ & 15$^{\circ}$ &  20$^{\circ}$ & 25$^{\circ}$ & 30$^{\circ}$ & 35$^{\circ}$ & 40$^{\circ}$ & 45$^{\circ}$\\
  \midrule
  $\beta^C$
  & $L^1$       & 2.02 & 2.01 & 2.01 & 2.00 & 2.00 & 2.01 & 2.01 & 2.01 & 2.02\\
  & $L^{\infty}$ & 1.88 & 1.68 & 1.63 & 1.60 & 1.60 & 1.56 & 1.54 & 1.53 & 1.58\\[1ex]
  $\beta^V$
  & $L^1$       & 2.02 & 2.01 & 2.01 & 2.01 & 2.01 & 2.01 & 2.01 & 2.01 & 2.00\\
  & $L^{\infty}$ & 1.62 & 1.60 & 1.57 & 1.55 & 1.51 & 1.55 & 1.54 & 1.53 & 1.59\\
\bottomrule
\end{tabular}  }   
\end{center}
\end{table}

For comparison we show in table \ref{Table: conv rates 2d ramp NEW} the convergence rates for additional
angles $\gamma$ as well as for using the constant velocity field $\beta^C$.
The results for the constant velocity $\beta^C$ are very similar to the ones for $\beta^V$, except for $5^{\circ}$ degree.
We note that it is reasonable that the results for a $5^{\circ}$ degree ramp are better as most cut cells have full length in flow direction, i.e., such a ramp contains significantly fewer `problematic' cut cells than a ramp with a higher angle and the sizes of neighboring cut cells do not differ as strongly. 

Finally, in figure \ref{fig: 2d contour var beta} (left column), we show the solution 
for $\beta^V$ and $\gamma=30^\circ$ on a coarse mesh of $N=30$.
The 1D profile along the cut boundary shows the expected sine curve.
For the contour plot, we observe that the contour lines are straight lines all the way to the boundary.
We note that for the initial data $u_0$ the contour lines are perpendicular to the ramp. Due to $\beta^V$ having decreasing speed for increasing distance to the ramp, the lines have been rotated during the simulation.

\begin{figure}[bht]
\centering
  \begin{tabular}{*{4}{p{0.21\linewidth}}}
\includegraphics[width=\linewidth]{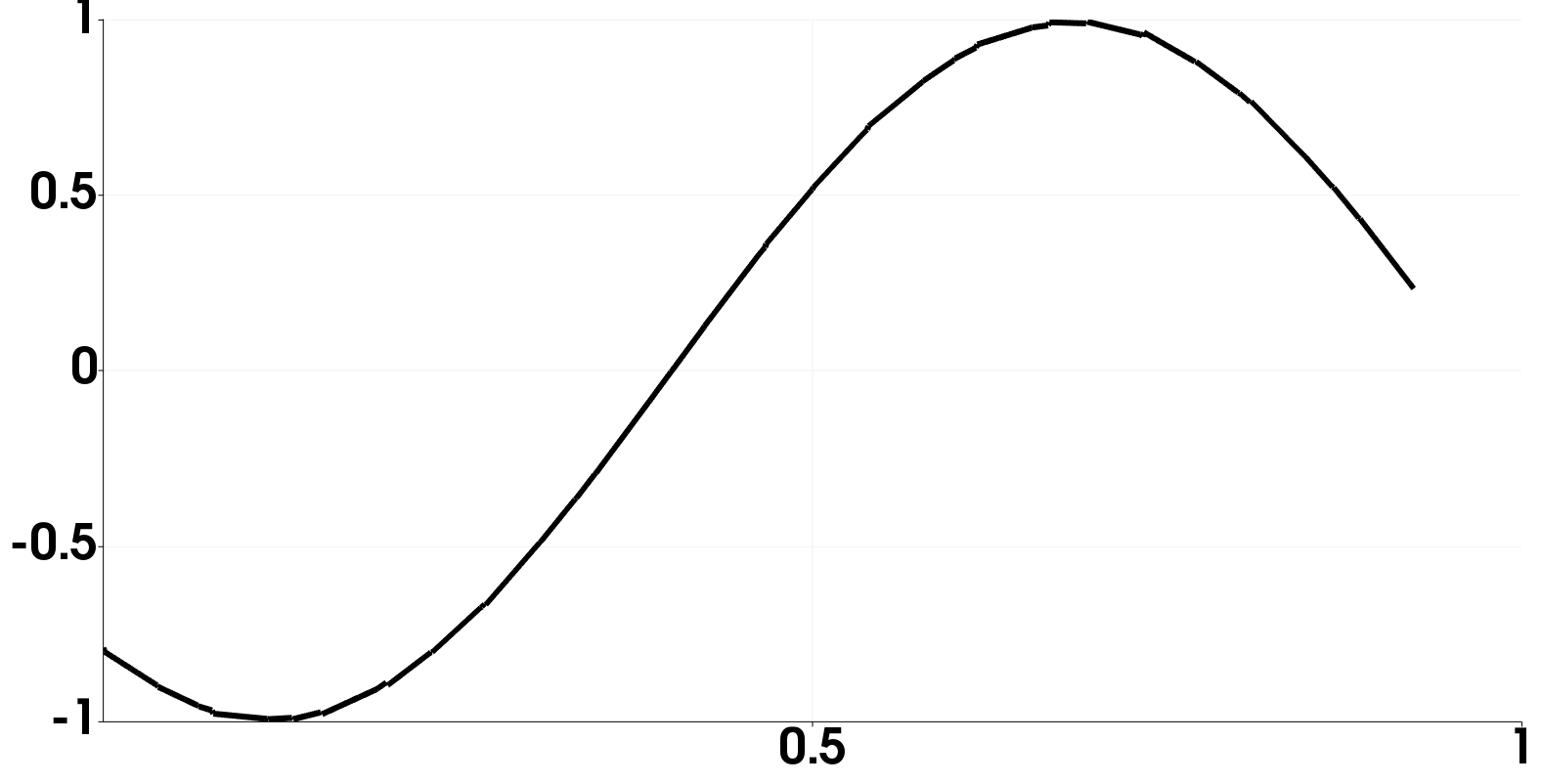}
&
\includegraphics[width=\linewidth]{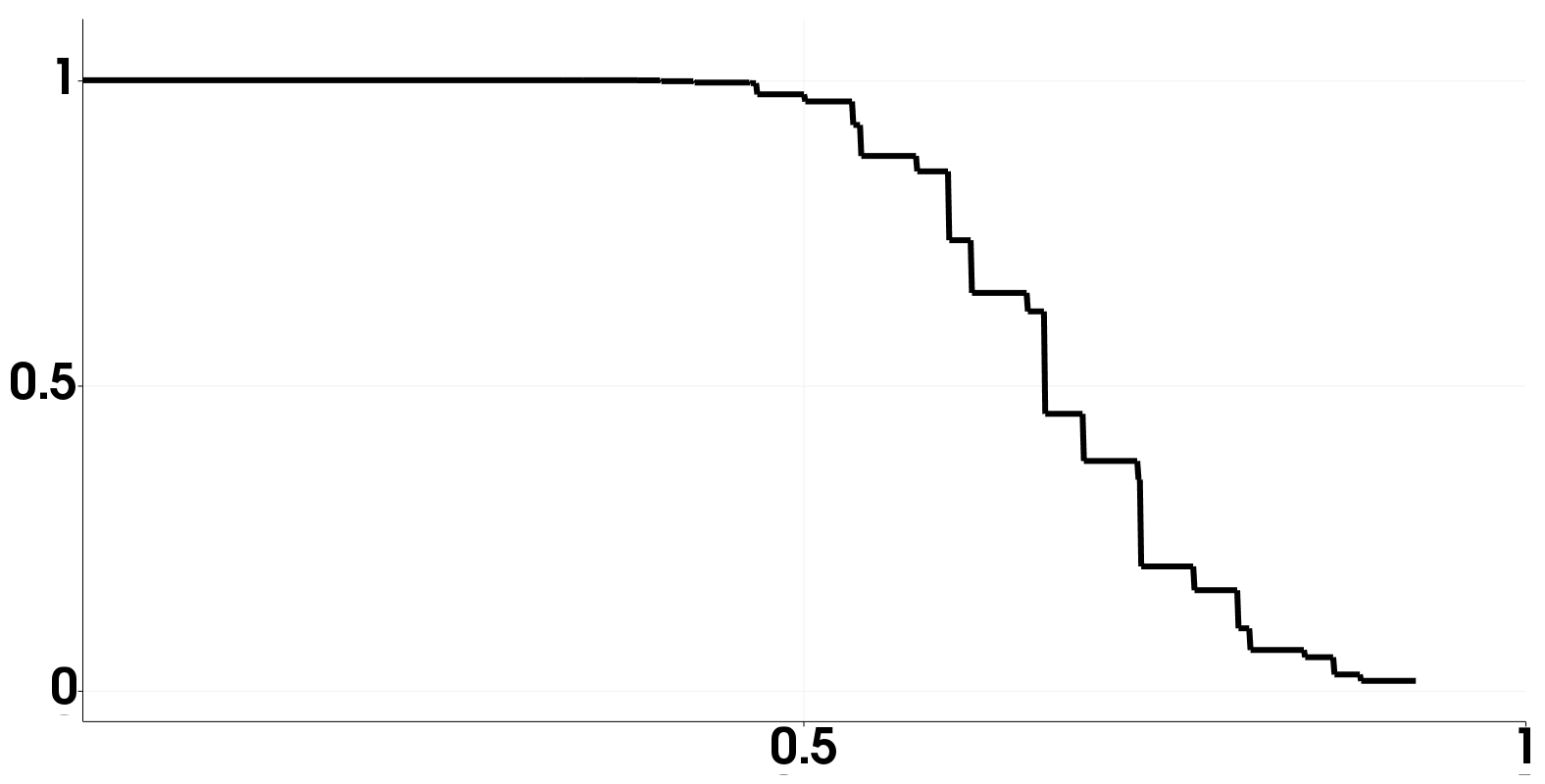}
&
\includegraphics[width=\linewidth]{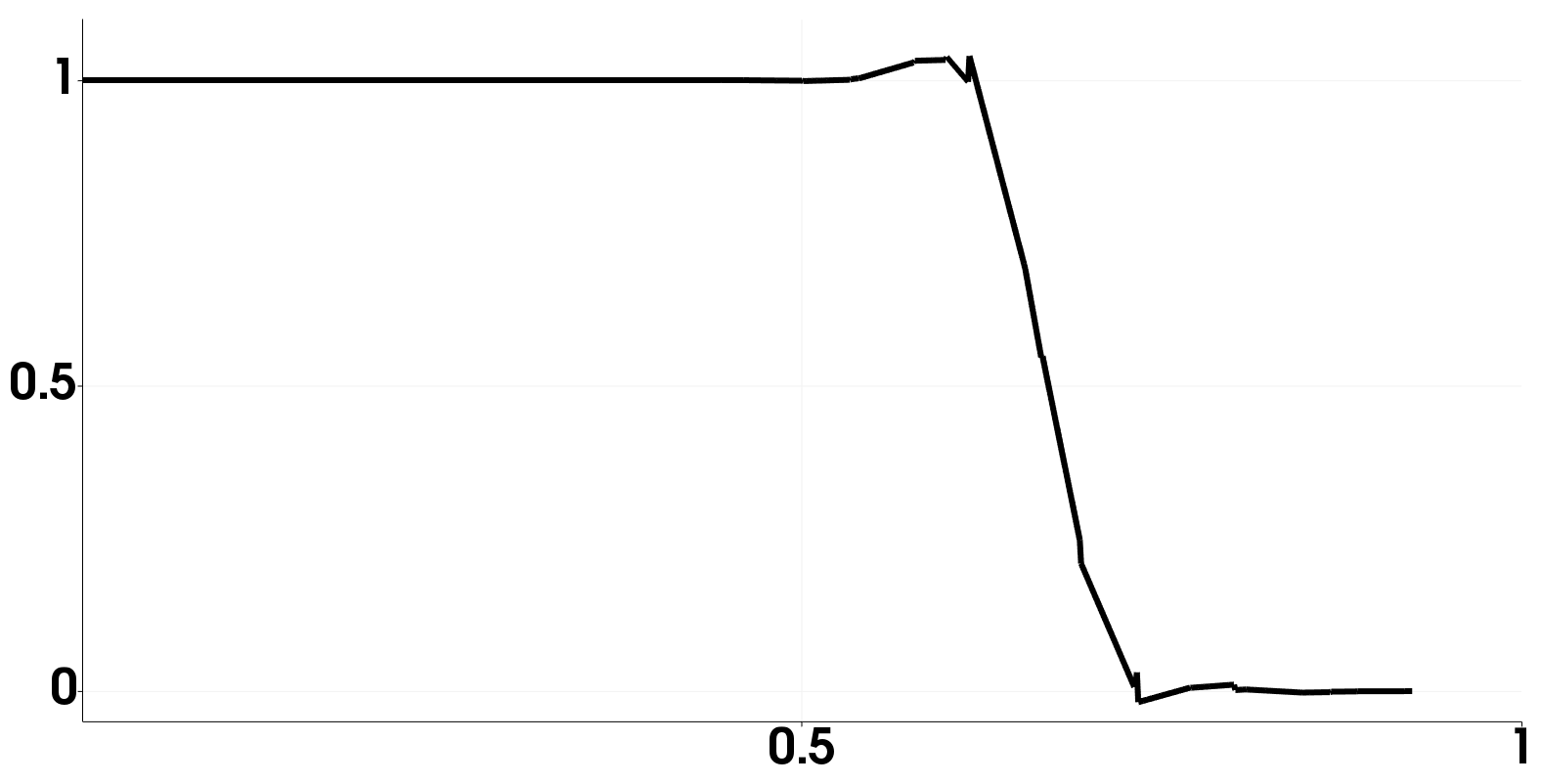}
&
\includegraphics[width=\linewidth]{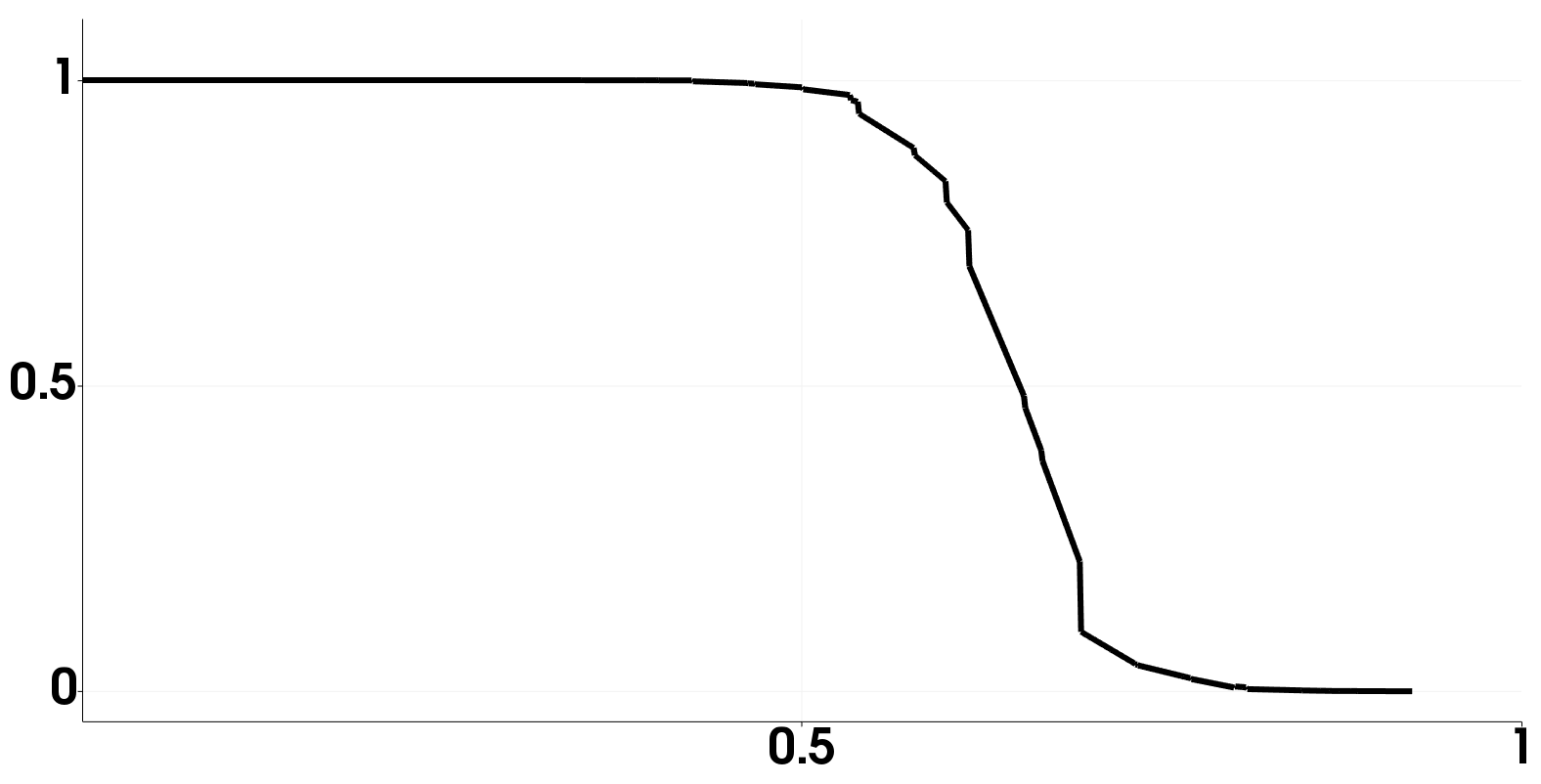}
\\
\includegraphics[width=\linewidth]{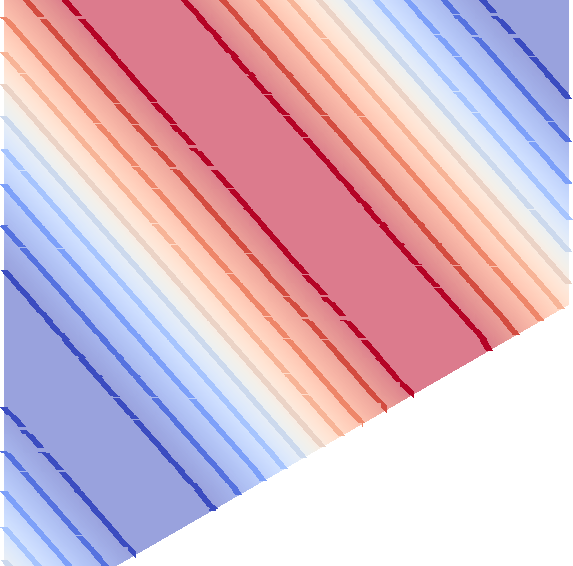}
&
\includegraphics[width=\linewidth]{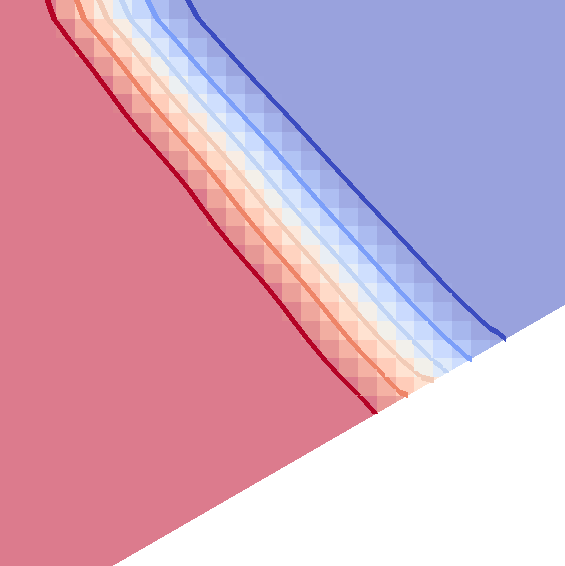}
&
\includegraphics[width=\linewidth]{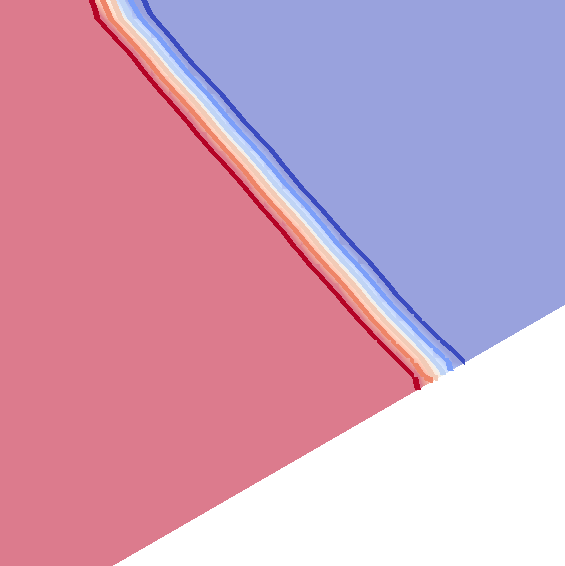}
&
\includegraphics[width=\linewidth]{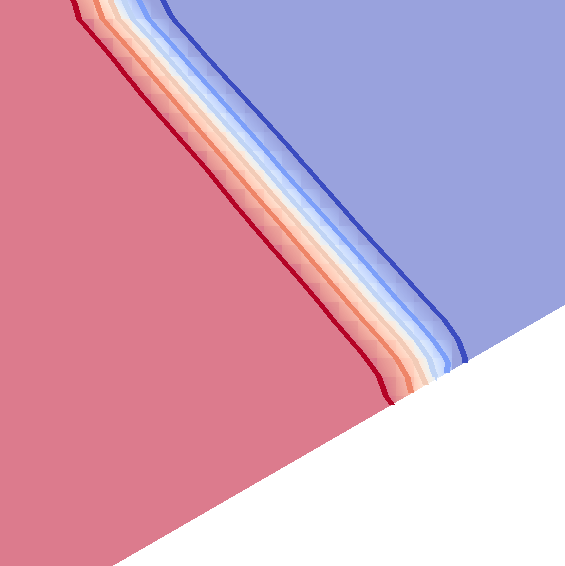}\\
{\small $V_h^1(\Th)$ w/o limiter}
&{\small $V_h^0(\Th)$}
&{\small $V_h^1(\Th)$ w/o limiter}
&{\small $V_h^1(\Th)$ with limiter}
\end{tabular}
\caption{Solutions of \textbf{Test 4} (column 1) and \textbf{Test 5}
  (columns 2-4) at time $T=0.4$, with
  $N=30$ for four different scenarios.
  Straight contour lines in columns 2 and 3 indicate that our stabilization does not add extra diffusion on the cut cells.
  The upper row plots show $u$
  along the cut boundary.}

\label{fig: 2d contour var beta}
\end{figure}

\subsubsection{Test 5: 2D Discontinuous initial data}
We use the velocity field $\beta^V$ and compute until time $T=0.4$ for discontinuous initial data
\begin{equation*}
    u_0(\hat{x},\hat{y}) = \begin{cases}
        1 & \text{ for } \hat{x} < \frac{4}{15}, \\
        0 & \text{ otherwise.} 
    \end{cases}
\end{equation*}

Figure \ref{fig: 2d contour var beta} (columns 2-4) shows the discrete solution 
for $N=30$ for $\gamma=30^\circ$ for both $V_h^0(\Th)$ and $V_h^1(\Th)$,
the latter without and with limiter. For both $V_h^0(\Th)$ and $V_h^1(\Th)$ with limiter, the discrete
solution stays between 0 and 1 and we do not observe overshoot. This is also indicated by the 1D line plots along the
cut boundary. 
Further, we verified numerically for
$V_h^0(\Th)$ that all matrix indices in the matrix $B^{-1}C$, compare
\eqref{eq: 1d lin syst monotone} for the 1D version, are
non-negative. 

Examining the contour lines more closely, we find that for $V_h^0(\Th)$ and $V_h^1(\Th)$ without limiter
the contour lines are straight lines all the way to the boundary, indicating that our stabilization does not add extra diffusion on the cut cells.
For the case of $V_h^1(\Th)$ with limiter, we observe slightly more diffusion along the boundary. We attribute this to the limiter: on a Cartesian cell, we only limit in $x$- and $y$-direction. On cut cells however we also limit roughly in advection direction when reconstructing to neighboring centroids. We expect this behavior to improve if a more accurate limiter, e.g., the LP limiter \cite{May_Berger_LP} is used, but this is not the focus of this work.

\section{Conclusions and Outlook}\label{sec:outlook}

We have presented a new stabilization for DG schemes for
linear scalar conservation laws on cut cell meshes that solves the small cell problem and makes explicit time
stepping stable again. 
Our stabilization is designed to only let a certain portion of the inflow of a small cut cell stay in that small cut cell and to transport the remaining
portion directly into the cut cell's outflow neighbors. As a by-product, we reconstruct the proper
domain of dependence of the small cut cell's outflow
neighbors. In that sense our stabilization relies on similar ideas as the $h$-box method \cite{Berger_Helzel_Leveque_2005,Berger_Helzel_2012}, but without an explicit geometry reconstruction.

The approach for realizing these ideas in a DG setting 
was inspired by the ghost penalty method \cite{Burman2010} but significant changes were necessary to
adjust the terms that were developed for elliptic problems to the setting of hyperbolic equations.
In this contribution, we have focused on using two standard
explicit time stepping schemes (the explicit Euler
scheme and the standard second-order TVD RK scheme)
and the standard Barth-Jespersen limiter but 
there are no explicit obstacles for working
with other
  choices of time stepping schemes and limiters.

Our stabilization ensures conservation (in a slightly extended meaning). In one dimension, we have shown that
the stabilized scheme is monotone for piecewise constant polynomials and total variation diminishing in the means for
piecewise linear polynomials in combination with explicit time
stepping schemes.
In numerical tests we observed optimal convergence rates in the $L^1$
  and $L^\infty$ norm for 1D. In 2D, the numerical results in the
  $L^1$ norm showed again full second-order convergence but the
  convergence rate in the $L^{\infty}$ norm was 
  slightly
  reduced. This will be examined in more detail
  in future work.
  We also plan to extend the stabilization to higher-order polynomial degrees and to non-linear conservation laws.\\

\appendix

\section{Proof of TVDM property}\label{appx:tvdm-proof}
\begin{proof}[Proof of lemma \ref{lem: TVDM P1 Euler}]
The structure follows that of the proof of lemma \ref{TVD_Modell1}. 
We will show that%
\[\textstyle
\sum_\jj \lvert \ubar_j^{n+1}-\ubar_{j-1}^{n+1} \rvert \leq \sum_\jj
\lvert \ubar_j^n-\ubar_{j-1}^n \rvert
\qquad  \forall n \ge 0 \text{ and } \forall j.
\]
We exploit the fact that a moment basis is used.
Testing with
$w_h = \mathbb{I}_j$ yields the update for $\ubar_j$. For the update of the gradient, it is sufficient to consider the information that the postprocessing by
applying the limiter provides.
Again it is sufficient to consider the case $\alpha < 2\lambda$. 

In the following $u_j(x)$ denotes the full (linear) solution of cell
$j$ evaluated at $x$, which can be written as $u_j(x) = \ubar_j + (x-x_j)\nabla u_{j}$.
We first provide the update formulae in the neighborhood of the cut cell $\Kone$ induced by the stabilization \eqref{eq: stab. scheme}:
\begin{subequations}
\begin{align}
\ubar_{k-1}^{n+1} &= \ubar_{k-1}^n-\lambda\left(u_{k-1}^n(x_{k-\frac{1}{2}})-u^n_{k-2}(x_{k-\frac{3}{2}})\right) \label{eq:App P1 k-1}\\
  \ubar_{\Kone}^{n+1} 
&=\textstyle\ubar_{\Kone}^n-\frac{1}{2}\left(u_{\Kone}^n(x_{\Kcut})-u_{k-1}^n(x_{\Kcut})\right)-\lambda h \nabla u_{k-1}^n \label{eq:App P1 k}\\
\ubar_{\Ktwo}^{n+1} &=\ubar_{\Ktwo}^n-\frac{\lambda}{1-\alpha}\left(u_{\Ktwo}^n(x_{k+\frac{1}{2}})-\frac{\alpha}{2\lambda}u_{\Kone}^n(x_{\Kcut})-\Big(1-\frac{\alpha}{2\lambda}\Big)u_{k-1}^n(x_{\Kcut})\right). \label{eq:App P1 k+1}
\end{align}
\end{subequations}

We decompose the sum of the TV in the means at time $t^{n+1}$ in terms $T_1$ to $T_5$:
\begin{align*}\textstyle
\sum_j \lvert \ubar_{\jj}^{n+1}-\ubar_{\jj-1}^{n+1} \rvert =& \underbrace{\textstyle\sum_{\jj\le \kk-1} \lvert \ubar_{\jj}^{n+1}-\ubar_{\jj-1}^{n+1} \rvert }_{T_1}+\underbrace{ \lvert \ubar_{\Kone}^{n+1}-\ubar_{\kk-1}^{n+1} \rvert }_{T_2}+\underbrace{ \lvert \ubar_{\Ktwo}^{n+1}-\ubar_{\Kone}^{n+1} \rvert}_{T_3}\\
&+\underbrace{ \lvert \ubar_{\kk+1}^{n+1}-\ubar_{\Ktwo}^{n+1} \rvert }_{T_4}+\underbrace{\textstyle\sum_{\jj\geq \kk+2} \lvert \ubar_{\jj}^{n+1}-\ubar_{\jj-1}^{n+1} \rvert}_{T_5}.
\end{align*}
In the following, we will estimate each of the terms $T_1$ to $T_5$
separately. We will use that the MC limiter guarantees
\begin{equation}\label{eq: prop MC limiter}
0\leq\frac{\nabla u_\jj}{\frac{2}{h_\jj}(\ubar_{\jj+1}^n-\ubar_\jj^n)}\leq 1, \quad
0\leq\frac{\nabla u_\jj}{\frac{2}{h_\jj}(\ubar_{\jj}^n-\ubar_{\jj-1}^n)}\leq 1,
\end{equation}
which implies
  \begin{align}
    \label{eq: P1_j}
\frac{u_j^n(x_{j+\frac{1}{2}})-u_{j-1}^n(x_{j-\frac{1}{2}})}{\ubar_j^n-\ubar_{j-1}^n}
&=1+\frac{\frac{h_j}{2}\nabla u_j}{\ubar_j^n-\ubar_{j-1}^n}-\frac{\frac{h_{j-1}}{2}\nabla u_{j-1}}{\ubar_j^n-\ubar_{j-1}^n} \quad \in\left[0,2\right].
\end{align}
Using this property and $\lambda < \frac 1 2$ (to guarantee the non-negativity of the pre-factors), there holds for two equidistant cells away from cells $\Kone$ and $\Ktwo$
\begin{multline*}
  \left\lvert\ubar_j^{n+1}-\ubar_{j-1}^{n+1}\right\rvert \leq
                                                               \left(1-\lambda\frac{u_j^n(x_{j+\frac{1}{2}})-u_{j-1}^n(x_{j-\frac{1}{2}})}{\ubar_j^n-\ubar_{j-1}^n}\right)\left\lvert\ubar_j^n-\ubar_{j-1}^n\right\rvert\\
+\lambda\frac{u_{j-1}^n(x_{j-\frac{1}{2}})-u_{j-2}^n(x_{j-\frac{3}{2}})}{\ubar_{j-1}^n-\ubar_{j-2}^n}\left\lvert\ubar_{j-1}^n-\ubar_{j-2}^n\right\rvert.
\end{multline*}
This yields bounds for $T_1$ and $T_5$:
\begin{align*}
T_1 &\leq \sum_{\jj\leq \kk-2}\left\lvert\ubar_j^n-\ubar_{j-1}^n\right\rvert +\left(1-\lambda\frac{u_{k-1}^n(x_{k-\frac{1}{2}})-u_{k-2}^n(x_{k-\frac{3}{2}})}{\ubar_{k-1}^n-\ubar_{k-2}^n}\right)\left\lvert\ubar_{k-1}^n-\ubar_{k-2}^n\right\rvert ,\\
T_5 &\leq \sum_{\jj\geq k+2}\left\lvert\ubar_j^n-\ubar_{j-1}^n\right\rvert +\lambda\frac{u_{k+1}^n(x_{k+\frac{3}{2}})-u_{\Ktwo}(x_{k+\frac{1}{2}}))}{\ubar_{k+1}^n-\ubar_{\Ktwo}^n}\left\lvert\ubar_{k+1}^n-\ubar_{\Ktwo}^n\right\rvert.
\end{align*}

For $T_2$ we can show using \eqref{eq:App P1 k-1} and \eqref{eq:App P1 k}
\begin{multline*}
T_2\leq \left(1-\frac{1}{2}\frac{u_{\Kone}^n(x_{\Kcut})-u_{\kk-1}^n(x_{\Kcut})}{\ubar_{\Kone}^n-\ubar_{\kk-1}^n}-\frac{\lambda h \nabla u_{k-1}}{\ubar_{\Kone}^n-\ubar_{\kk-1}^n}\right)\left\lvert\ubar_{\Kone}^n-\ubar_{k-1}^n\right\rvert\\
+\lambda \frac{u_{k-1}^n(x_{k-\frac{1}{2}})-u_{k-2}^n(x_{k-\frac{3}{2}})}{\ubar_{k-1}^n-\ubar_{k-2}^n}\left\lvert\ubar_{k-1}^n-\ubar_{k-2}^n\right\rvert.
\end{multline*}
For this we need to verify that the two
pre-factors are non-negative and that it is therefore allowed to pull them out of the absolute value.
The pre-factor of the second term is non-negative
due to \eqref{eq: P1_j}.
For the first pre-factor we obtain using \eqref{eq: cond limiting cut cell neigh text}
\begin{align*}
  \begin{split}
\frac{u_{\Kone}^n(x_{\Kcut})-u_{\kk-1}^n(x_{\Kcut})}{\ubar_{\Kone}^n-\ubar_{\kk-1}^n}
&=
1+\frac{\frac{\alpha h}{2}\nabla u_{\Kone}}{\ubar_{\Kone}^n-\ubar_{\kk-1}^n}-\frac{\left(\frac{h}{2}+\alpha h\right)\nabla u_{\kk-1}}{\ubar_{\Kone}^n-\ubar_{\kk-1}^n} \quad \in [0,2].
\end{split}
\end{align*}
This implies with \eqref{eq: prop MC limiter}, $\lambda < \frac{1}{4}$, and \eqref{eq: cond limiting cut cell neigh text}
\begin{multline*}
    1-\frac{1}{2}\frac{u_{\Kone}^n(x_{\Kcut})-u_{\kk-1}^n(x_{\Kcut})}{\ubar_{\Kone}^n-\ubar_{\kk-1}^n}-\frac{\lambda h \nabla u_{k-1}}{\ubar_{\Kone}^n-\ubar_{\kk-1}^n}
  \\
  =
    \frac{1}{2}-\frac{1}{2}\frac{\frac{\alpha h }{2}\nabla u_{\Kone}}{\ubar_{\Kone}^n-\ubar_{\kk-1}^n}
    +\frac{1}{2}\frac{\left(\left(\frac{h}{2}+\alpha h\right)-2\lambda h\right) \nabla u_{k-1}}{\ubar_{\Kone}^n-\ubar_{\kk-1}^n}
  \quad \in [0,1].
\end{multline*}

For $T_3$, we get from \eqref{eq:App P1 k} and \eqref{eq:App P1 k+1}
\begin{align*}
T_3\leq& \left(1-\frac{\lambda}{1-\alpha}\frac{u_{\Ktwo}^n(x_{\kk+\frac{1}{2}})-u_{\Kone}^n(x_{\Kcut})}{\ubar_{\Ktwo}^n-\ubar_{\Kone}^n}\right)\left\lvert\ubar_{\Ktwo}^n-\ubar_{\Kone}^n \right\rvert\\
&+\left(\left(\frac{1}{2}+\frac{\alpha-2\lambda}{2(1-\alpha)}\right)\frac{u_{\Kone}^n(x_{\Kcut})-u_{\kk-1}^n(x_{\Kcut})}{\ubar_{\Kone}^n-\ubar_{\kk-1}^n}+\frac{\lambda h \nabla u_{k-1}}{\ubar_{\Kone}^n-\ubar_{k-1}^n}\right)\left\lvert\ubar_{\Kone}^n-\ubar_{\kk-1}^n \right\rvert.
\end{align*}
Again we check the pre-factors.
The first pre-factor is obviously non-negative because of \eqref{eq: P1_j} and $\frac{\lambda}{1-\alpha}\leq \frac{1}{2}$.
Further, $\frac{1}{2}+\frac{\alpha-2\lambda}{2(1-\alpha)}\ge 0 $ for $\lambda \le \frac{1}{2}$. Therefore, using
\eqref{eq: prop MC limiter}, the second pre-factor is non-negative as well.
Finally, for $T_4$ we estimate
\begin{align*}
T_4 \leq& \left(1-\lambda\frac{u_{k+1}^n(x_{k+\frac{3}{2}})-u_{\Ktwo}(x_{k+\frac{1}{2}})}{\ubar_{k+1}^n-\ubar_{\Ktwo}^n}\right)\left\lvert \ubar_{k+1}^n-\ubar_{\Ktwo}^n\right\rvert\\
&+\frac{\lambda}{1-\alpha}\frac{u_{\Ktwo}^n(x_{k+\frac{1}{2}})-u_{\Kone}(x_{\Kcut})}{\ubar_{\Ktwo}^n-\ubar_{\Kone}^n}\left\lvert\ubar_{\Ktwo}^n-\ubar_{\Kone}^n\right\rvert\\
&+\left(\frac{\lambda}{1-\alpha}-\frac{\alpha}{2(1-\alpha)}\right)\frac{u_{\Kone}^n(x_{\Kcut})-u_{k-1}(x_{\Kcut})}{\ubar_{\Kone}^n-\ubar_{k-1}^n}\left\lvert \ubar_{\Kone}^n-\ubar_{k-1}^n\right\rvert.
\end{align*}
For the last term there holds
$
\frac{\lambda}{1-\alpha}-\frac{\alpha}{2(1-\alpha)}\geq 0 $ due to
$2\lambda\geq \alpha.
$
Together with other previously shown estimates, this implies that all three pre-factors are non-negative.
Finally, summing up the estimates for $T_1,\ldots,T_5$ implies the claim.
\end{proof}

\bibliographystyle{plain}
\bibliography{Literature}

\end{document}